\allowdisplaybreaks \pagestyle{myheadings}\markboth{Korn
\def\rr{{\mathbb R}}
\def\rn{{{\rr}^n}}
\def\cn{{\mathbb N}}
\def\fz{\infty}
\def\az{\alpha}
\def\supp{{\mathop\mathrm{\,supp\,}}}
\def\dist{{\mathop\mathrm {\,dist\,}}}
\def\diam{{\mathop\mathrm {\,diam\,}}}
\def\ez{\epsilon}
\def\bz{\beta}
\def\ro{\rho}
\def\gz{{\gamma}}
\def\sz{\sigma}
\def\ls{\lesssim}
\def\r{\right}
\def\lf{\left}
\newtheorem{thm}{Theorem}[section]
\newtheorem{lem}{Lemma}[section]
\newtheorem{prop}{Proposition}[section]
\newtheorem{rem}{Remark}[section]
\newtheorem{defn}{Definition}[section]
\numberwithin{equation}{section}
\begin{document}
\arraycolsep=1pt
\author{Renjin Jiang and Aapo Kauranen} \arraycolsep=1pt
\title{\Large\bf Korn inequality on irregular  domains}
 \footnotetext{
{\it 2010 Mathematics Subject Classification: 26D10, 35A23}\\
{\it Key words and phrases.  Korn inequality, divergence equation, Poincar\'e inequality, $s$-John domain, quasihyperbolic metric}
\endgraf}

\date{ }
\maketitle
\begin{center}
\begin{minipage}{10cm}\small
{\noindent{\bf Abstract.} In this paper, we study the weighted Korn
inequality on some irregular domains, e.g., $s$-John domains and
domains satisfying quasi-hyperbolic boundary conditions. Examples regarding
sharpness of the Korn inequality on these domains are presented. Moreover,
we show that Korn inequalities imply  certain Poincar\'e inequality.
 }\end{minipage}
\end{center}
\vspace{0.2cm}

\section{Introduction}
\hskip\parindent
Let $\Omega$ be a bounded domain of $\rr^n$, $n\ge 2$. For each
vector $\mathbf{v}=(v_1,\cdots,v_n)\in W^{1,p}(\Omega)^n$, let
$D\mathbf{v}$ denotes its gradient matrix, and $\epsilon
(\mathbf{v})$ denotes the symmetric part of $D\mathbf{v}$., i.e.,
$\epsilon(\mathbf{v})=(\epsilon_{i,j}(\mathbf{v}))_{1\le i,j\le n}$
with $$\epsilon_{i,j}(\mathbf{v})=\frac 12(\frac{\partial
v_i}{\partial x_j}+\frac{\partial v_j}{\partial x_i}). $$ Korn's
(second) inequality states that, if $\Omega$ is sufficient regular
(e.g., Lipschitz), then there exists $C>0$ such that
$$\int_\Omega|D\mathbf{v}|^p\,dx\le C\lf\{\int_\Omega| \ez(\mathbf{v})|^p\,dx+\int_\Omega| \mathbf{v}|^p\,dx\r\}.\leqno(K_{p})$$
The Korn inequality $(K_p)$ is a fundamental tool in the theory of linear elasticity equations;
see \cite{adm06,adl06,dl10,dl76,f47,ho95,ko89,ti01} and the references therein.
Notice that Korn inequality $(K_p)$ fails for $p=1$ even on a cube; see the example from \cite{cfm05}.

On $\rr^2$ and $p=2$, several different inequalities (including the Friedrichs' inequality)
are actually equivalent to Korn's inequality $(K_p)$ on simply connected Lipschitz domains; see \cite{hp83,ti01} for example.

Friedrichs \cite{f47} proved the Korn inequality $(K_p)$ for $p=2$ on
domains with a finite number of corners or edges on $\partial \Omega$,
 Nitsche \cite{nit81} proved the Korn inequality $(K_p)$ for $p=2$ on Lipschitz domains,
while Ting \cite{ti72} proved $(K_p)$ for all $p\in (1,\fz)$ by using Calder\'on-Zygmund theory;
Kondratiev and Oleinik \cite{ko89} studied the Korn inequality $(K_2)$ on star-shaped domains.
Recently, Acosta, Dur\'an and Muschietti \cite{adm06} proved the Korn inequality $(K_p)$
holds for all $p\in (1,\fz)$ on John domains.

Weighted Korn inequality on irregular domains (in particular, H\"older domains) have received
considerable interest recently; see \cite{adm06,adl06,adf13,dl10,ko89} and references therein. Motivated by this, in this
paper, we study weighted Korn inequality on some irregular domains including $s$-John domains $(s\ge 1)$
and domains satisfying quasihyperbolic boundary conditions.

We use  the divergence equation as the main tool, which is intimately connected to the weighted Poincar\'e inequality;
for the recent progress see  \cite{adm06,bb03,dmrt10,jkk13}. Our approach is in particular motivated
by \cite{dmrt10}.
Let $\Omega$ be a bounded domain in $\rr^n$. From \cite[Theorem 4.1]{dmrt10} the validity of Poincar\'e inequality
$$\int_\Omega|u(x)-u_{\Omega}|^p\,dx\le C\int_\Omega|\nabla u(x)|^p \dist(x,\partial\Omega)^b\,dx,$$
implies certain regularity of solutions to the divergence equation $\mathrm{div}\,{\mathbf u}=f$. Then by using duality, one gets
the (weighted) Korn inequality; see \cite{dl10} for instance. We in
Section 2 will generalize the arguments to more general settings to obtain the (weighted) Korn inequality.

We in Section 3 will go to $s$-John domains and domains satisfying quasi-hyperbolic boundary conditions, respectively.
By using Poincar\'e inequalities on these domains, we deduce the (weighted) Korn inequalities on them.
Moreover, we will show the obtained (weighted) Korn inequalities are sharp by presenting some counter-examples in Section 4.

Notice that the weighted Poincar\'e inequality on $s$-John domains is well known (see \cite{hk98,km00}), however,
there are no similar results on domains satisfying quasi-hyperbolic boundary conditions. To this end, we
will in Section 3 establish the weighted Poincar\'e inequality on such domains,
which may have independent interest.

Another interesting  question is what is the geometric counterpart of the Korn inequality.
In general the Korn inequality $(K_p)$ does not imply any Poincar\'e inequality.
Indeed, if $\Omega_1,\Omega_2\subset\rn$, $\Omega_1\cap \Omega_2=\emptyset$,
are two domains that support the Korn inequality $(K_p)$,
then $\Omega:=\Omega_1\cup \Omega_2$ admits the Korn inequality $(K_p)$ also. However, Poincar\'e inequality
does not have this property.

We in subsection 2.2 will show that, if the following Korn inequality
$$\int_\Omega|D\mathbf{v}|^p\,dx\le C\lf\{\int_\Omega| \ez(\mathbf{v})|^p\,dx+\int_Q| \mathbf{v}|^p\,dx\r\}\leqno(\widetilde{K}_{p})$$
holds for some cube $Q\subset\subset \Omega$, then there is a Poincar\'e inequality on $\Omega$.

The paper is organized as follows. In Section 2, we will show that, abstractly,
weighted Poincar\'e inequality implies a weighted Korn inequality; conversely, Korn inequality $(\widetilde{K}_{p})$ also
implies a Poincar\'e inequality.
In Section 3, we establish the Korn inequality on $s$-John domains and domains satisfying quasihyperbolic
boundary conditions, and present examples for the sharpness of the Korn inequality in Section 4.

Throughout the paper, we denote by $C$ positive constants which
are independent of the main parameters, but which may vary from
line to line. For $p\in [1,n)$, denote its Sobolev conjugate
$\frac{np}{n-p}$ by $p^\ast$.  Corresponding to to a function space $X$, we denote its $n$-vector valued
spaces by $X^n$. We will usually omit the superscript $n$ or $n\times n$ for simplicity.

\section{Korn inequality and Poincar\'e inequality}
\hskip\parindent
In this section, we  show that, abstractly, Poincar\'e inequality
implies Korn inequality; and  conversely, certain Korn inequality implies a Poincar\'e inequality.

Throughout the paper, let $\ro(x)$ be  the distance from $x$ to the boundary
$\partial \Omega$, i.e.,  $\ro(x):=\dist(x,\partial \Omega)$.
Let $a,b\in\rr$ and $p\in [1,\fz)$, the weighted Lebesgue space $L^p(\Omega,\ro^a)$ is defined as
set of all measurable functions $f$ in $\Omega$ such that
$$\|f\|_{L^p(\Omega,\ro^a)}:=\lf(\int_\Omega |f(x)|^p\ro(x)^a\,dx\r)^{1/p}<\fz.$$
We denote by $L^p_0(\Omega,\ro^a)$ the set of functions $f\in L^p(\Omega,\ro^a)$
with $\int_\Omega f(x)\ro(x)^a\,dx=0$.

The weighted Sobolev space $W^{1,p}(\Omega,\ro^a,\ro^b)$  is defined as
$$W^{1,p}(\Omega,\ro^a,\ro^b):=\lf\{u\in L^p(\Omega,\ro^a): \,\nabla u\in \mathscr{D}'(\Omega)\cap L^p(\Omega,\ro^b)\r\}$$
with the norm
$$\|u\|_{W^{1,p}(\Omega,\ro^a,\ro^b)}:=\|u\|_{L^p(\Omega,\ro^a)}+\|\nabla u\|_{L^p(\Omega,\ro^b)}.$$
We denote $W^{1,p}(\Omega,\ro^a,\ro^a)$ by $W^{1,p}(\Omega,\ro^a),$ and denote
$W^{1,p}(\Omega,\ro^a)$ by $W^{1,p}(\Omega)$ if $a=0$.

Notice that as $\ro^a$ and $\ro^b$ are continous positive functions in $\Omega$, smooth functions
$C^\fz(\Omega)\cap W^{1,p}(\Omega,\ro^a,\ro^b)$
is dense in $W^{1,p}(\Omega,\ro^a,\ro^b)$; see \cite[Theorem 3]{hk98}.

Let $p\ge 1$ and $a\ge 0$. We say that the $(P_{p,a,b})$-Poincar\'e inequality holds, if
there exists $C>0$ such that for every $u\in W^{1,p}(\Omega,\ro^a,\ro^b)$, it holds
$$\int_\Omega|u(x)-u_{\Omega,a}|^p\ro(x)^a\,dx\le C\int_\Omega|\nabla u(x)|^p\ro(x)^b\,dx,\leqno(P_{p,a,b})$$
where we denote by $u_{\Omega,a}:=\frac{1}{\int_{\Omega}\ro^a\,dx}\int_\Omega u\ro^a\,dx$ and
$u_{\Omega}:=u_{\Omega,a}$ for $a=0$.

\subsection{Korn inequality from Poincar\'e inequality}
\hskip\parindent In this subsection we will prove that Poincar\'e inequality implies Korn
inequality and in the following Section 3 we will provide examples which show
sharpness of our results.

\begin{thm}\label{t2.1}
Let $\Omega$ be a bounded domain of $\rr^n$, $n\ge 2$. Let
$p>1$, $a\ge 0$ and $b\in\rr$. Suppose the $(P_{p,a,b})$-Poincar\'e inequality holds on $\Omega$.
Then for an arbitrarily fixed  cube $Q\subset\subset \Omega$, there exists
$C=C(p,a,b,\Omega,Q)$ such that for every $\mathbf{v}\in W^{1,p}(\Omega,\ro^a)^n$, the following
inequality holds
$$\int_\Omega|D\mathbf{v}(x)|^p\ro(x)^a\,dx\le C\lf\{\int_\Omega| \ez(\mathbf{v})(x)|^p\ro(x)^{b-p}\,dx+\int_Q| D \mathbf{v}(x)|^p\ro(x)^{a}\,dx\r\}.\leqno{(\widetilde{K}_{p,a,b-p})}$$
\end{thm}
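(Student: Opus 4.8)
The plan is to use the standard duality argument connecting the Korn inequality to the solvability of the divergence equation, following the scheme alluded to in the introduction (as in \cite{dl10,dmrt10,adm06}). The starting point is the classical observation that for $\mathbf{v}\in W^{1,p}(\Omega,\ro^a)^n$, each entry of the gradient matrix can be recovered from entries of $\ez(\mathbf{v})$ and their derivatives via the identity
$$\frac{\pa^2 v_i}{\pa x_j\pa x_k}=\frac{\pa \ez_{ij}(\mathbf{v})}{\pa x_k}+\frac{\pa \ez_{ik}(\mathbf{v})}{\pa x_j}-\frac{\pa \ez_{jk}(\mathbf{v})}{\pa x_i},$$
so that $D\mathbf{v}$ is controlled, modulo lower-order terms, once one can invert the divergence operator with the right weights. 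Concretely, I would first establish, \emph{dually} to the $(P_{p,a,b})$-Poincar\'e inequality, the following solvability statement: for every $f\in L^{p}_0(\Omega,\ro^a)$ there exists $\mathbf{u}\in W^{1,p}_0$-type space with $\mathrm{div}\,\mathbf{u}=f$ and
$$\|D\mathbf{u}\|_{L^{p}(\Omega,\ro^{a+p-b})}\le C\|f\|_{L^{p}(\Omega,\ro^a)},$$
at least after allowing a correction supported in the fixed cube $Q$. This equivalence between weighted Poincar\'e and weighted divergence is exactly the content of the Dur\'an--Muschietti--Russ--Tsogtgerel circle of ideas, and I would cite/adapt \cite[Theorem 4.1]{dmrt10} together with a Hahn--Banach / open-mapping duality on the weighted spaces; the exponent bookkeeping ($a$ on $L^p$, $b$ on $\nz u$ in Poincar\'e $\leftrightarrow$ $a$ on $f$, $a+p-b$ on $D\mathbf{u}$ in the divergence estimate) is the place where the shift by $p$ in $(\widetilde K_{p,a,b-p})$ originates.

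Granting the divergence estimate, the Korn inequality follows by testing. Fix $\mathbf{v}$ and, for a fixed index pair, consider the distribution $\pa_k v_i$. I would write its pairing against an arbitrary test function $\phi\in C_c^\fz(\Omega)$ with $\int_\Omega \phi\,\ro^a=0$: decompose $\phi$, using the solution operator above, as $\phi=\mathrm{div}\,\mathbf{u}$ with $\mathbf{u}$ controlled in $L^{p'}(\Omega,\ro^{\text{(dual weight)}})$-type norm, integrate by parts to move a derivative off $v_i$ onto $\mathbf{u}$, and then use the algebraic identity above to re-express the resulting derivatives of $v_i$ in terms of $\ez_{\cdot\cdot}(\mathbf{v})$. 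Another integration by parts transfers the derivative onto $\mathbf{u}$ again, producing a pairing of $\ez(\mathbf{v})$ against $D\mathbf{u}$, which H\"older's inequality (with the two conjugate weighted norms matching up precisely because of the way the divergence estimate was set up) bounds by $\|\ez(\mathbf{v})\|_{L^p(\Omega,\ro^{b-p})}\|\phi\|_{L^{p'}(\Omega,\ro^{a(1-p')})}$. Taking the supremum over such $\phi$ and invoking the weighted $L^p$--$L^{p'}$ duality recovers $\|\pa_k v_i - c\|_{L^p(\Omega,\ro^a)}$ for the appropriate constant $c$ (the $\ro^a$-average of $\pa_k v_i$ over $\Omega$), summed over $i,k$ this is $\|D\mathbf{v}-\mathbf{c}\|_{L^p(\Omega,\ro^a)}$. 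Finally the constant matrix $\mathbf{c}$ is removed using the cube $Q$: on $Q$ the weight $\ro^a$ is comparable to a positive constant, so $|\mathbf{c}|^p\lesssim \fint_Q|D\mathbf{v}|^p + \fint_Q|D\mathbf{v}-\mathbf{c}|^p \lesssim \int_Q |D\mathbf{v}|^p\ro^a + \int_\Omega|D\mathbf{v}-\mathbf{c}|^p\ro^a$, and combining gives exactly $(\widetilde K_{p,a,b-p})$.

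A few technical points need care. First, one should work with a dense subclass (smooth $\mathbf{v}$ and smooth $\phi$) and pass to the limit using the density of $C^\fz(\Omega)\cap W^{1,p}(\Omega,\ro^a,\ro^b)$ noted after \cite[Theorem 3]{hk98}, and an approximation argument for the divergence solution operator. Second, the weighted duality $(L^p(\Omega,\ro^a))^*=L^{p'}(\Omega,\ro^{a(1-p')})$ must be used consistently, and one must check the subtracted constants in the two Poincar\'e-type quantities are the $\ro^a$-weighted averages so the cancellations are legitimate; since $a\ge 0$ and $\ro$ is bounded, $\ro^a\in L^1(\Omega)$ so these averages are well defined. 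Third, one must verify that $D\mathbf{v}\in L^p(\Omega,\ro^a)$ actually makes the pairings above finite, which is immediate since $\mathbf{v}\in W^{1,p}(\Omega,\ro^a)^n$ by hypothesis.

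The step I expect to be the main obstacle is the rigorous passage from the $(P_{p,a,b})$-Poincar\'e inequality to the weighted divergence estimate with the correct exponents and with the freedom of a $Q$-supported correction; this is where the real analytic content sits (it is essentially a weighted, ``domain-with-a-handle'' version of the Bogovski\u\i/Ne\v{c}as argument), and getting the weight $\ro^{a+p-b}$ on $D\mathbf{u}$ exactly right — rather than off by the wrong power of $\ro$ — is the delicate bookkeeping that forces the $b-p$ (instead of $b$) in the statement of $(\widetilde K_{p,a,b-p})$. Everything after that is the by-now-standard duality manipulation.
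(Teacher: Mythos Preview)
Your plan is essentially the paper's own proof: derive weighted solvability of the divergence equation from $(P_{p,a,b})$ (this is the paper's Proposition~2.1, adapted from \cite{dmrt10}), then test each $\partial_j v_i$ against mean-zero data, integrate by parts twice using the second-derivative identity to land on a pairing of $\epsilon(\mathbf v)$ with $D\mathbf u$, apply H\"older, and finally absorb the subtracted constant using the fixed cube $Q$. The paper removes the constant with a cutoff $\psi\in C_c^\infty(Q)$ rather than your direct estimate of $|\mathbf c|$, but that is cosmetic.

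Two small corrections to your bookkeeping, since you flag this as the delicate point. First, the divergence solution lives in the \emph{dual} exponent: for $f\in L^{p'}_0(\Omega,\ro^a)$ one obtains $\mathbf u$ with $D\mathbf u\in L^{p'}$, not $L^p$ (you switch to $L^{p'}$ later, so this is likely a slip). Second, the correct weight on $D\mathbf u$ is $\ro^{\,p'-p'b/p}=\ro^{\,p'(p-b)/p}$, not $\ro^{a+p-b}$; the parameter $a$ does not enter the gradient weight because the equation is $\mathrm{div}\,\mathbf u=f$ in $\mathcal D'(\Omega,\ro^a)$, i.e.\ $\int\mathbf u\cdot\nabla\phi=\int f\phi\,\ro^a$. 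With this exponent the H\"older pairing against $\|\epsilon(\mathbf v)\|_{L^p(\Omega,\ro^{b-p})}$ closes exactly. Also, no ``correction supported in $Q$'' is needed at the divergence-solving stage; the cube $Q$ enters only at the very end to kill the constant.
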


\begin{rem}\label{r2.1}\rm
If $a=0$ and $b=p$, then ($\widetilde{K}_{p,a,b-p}$) implies ($K_p$); see Kondratiev and Oleinik
\cite{ko89}. Indeed, as $Q\subset\subset \Omega$, it holds $\dist(Q,\partial \Omega)\le \ro(x)\le \diam(\Omega)$
for each $x\in Q$.
Since the Korn inequality $(K_p)$ holds on cubes, we always have
\begin{eqnarray*}
\int_Q| D \mathbf{v}|^p\ro^{a}\,dx&&\le C(a,\Omega,Q)\int_Q| D \mathbf{v}|^p\,dx\le C(a,\Omega,Q)\lf\{\int_Q| \ez(\mathbf{v})|^p\,dx+\int_Q| \mathbf{v}|^p\,dx\r\}\\
&&\le C(p,a,b,\Omega,Q)\lf\{\int_Q| \ez(\mathbf{v})|^p\ro^{b-p}\,dx+\int_Q| \mathbf{v}|^p\ro^{a}\,dx\r\}.
\end{eqnarray*}
Thus $(\widetilde{K}_{p,a,b-p})$ above implies that
$$\lf\|D\mathbf{v}\r\|_{L^p(\Omega,\ro^a)}\le C(p,a,b,\Omega,Q)
\lf\{\|\epsilon(\mathbf{v})\|_{L^p(\Omega, \ro^{b-p})}+\lf\|\mathbf{v}\r\|_{L^p(Q,\ro^a)}\r\}$$
and hence
$$\lf\|D\mathbf{v}\r\|_{L^p(\Omega,\ro^a)}\le C(p,a,b,\Omega,Q)
\lf\{\|\epsilon(\mathbf{v})\|_{L^p(\Omega, \ro^{b-p})}+\lf\|\mathbf{v}\r\|_{L^p(\Omega,\ro^a)}\r\},\leqno{({K}_{p,a,b-p})}$$
which is the usual Korn inequality $(K_p)$ if $a=0$ and $b=p$.
\end{rem}

We employ the divergence equation to prove the previous theorem.

Let $p,q\in (1,\fz)$ satisfying $1/q+1/p=1$, and $\Omega$ be a bounded domain in $\rn$. A vector function
$\mathbf{u}$ is called a solution to the divergence equation
$\mathrm{div}\,\mathbf{u}=f$
for some $f\in L^p_0(\Omega,\ro^a)$, if for every $\phi\in W^{1,q}(\Omega,\ro^a,\ro^b)$ it holds that
$$\int_\Omega \mathbf{u}(x)\cdot \nabla \phi(x)\,dx=\int_\Omega f(x) \phi(x) \ro(x)^a\,dx.\leqno{({\mathrm{div}}_{p,a,b})}$$
Recall that $C^\fz(\Omega)\cap W^{1,q}(\Omega,\ro^a,\ro^b)$ is dense in $W^{1,q}(\Omega,\ro^a,\ro^b)$.

\begin{prop}\label{p2.1}
Let $\Omega$ be a bounded domain in $\rn$, $p,q\in (1,\fz)$ with $1/p+1/q=1$, $a\ge 0$ and $b\in\rr$.
Suppose that $\Omega$ supports a $(P_{p,a,b})$-Poincar\'e inequality, then for each
$f\in L^q_0(\Omega,\ro^a)$, there exists $\mathbf{u}\in W^{1,q}(\Omega, \ro^{-qb/p},\ro^{q-qb/p})^n$
such that
$$\mathrm{div}\mathbf{u}=f\quad \mbox{in}\quad \mathcal{D}'(\Omega,\ro^a)$$
and
$$\|D \mathbf{u}\|_{L^q(\Omega,\ro^{q-qb/p})}\le C\|f\|_{L^q(\Omega,\ro^a)},$$
where $C=C(n,p,a,b)>0$.
\end{prop}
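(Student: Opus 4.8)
The plan is to construct $\mathbf{u}$ by a duality/Hahn--Banach argument against the Poincar\'e inequality, following the scheme of \cite{dmrt10}. First I would fix $f\in L^q_0(\Omega,\ro^a)$ and consider the linear functional
$$\ell(\nabla\phi):=\int_\Omega f(x)\,\phi(x)\,\ro(x)^a\,dx$$
defined on the subspace $\{\nabla\phi:\phi\in W^{1,p}(\Omega,\ro^a,\ro^b)\}$ of $L^p(\Omega,\ro^b)^n$. The key point is that this functional is \emph{well-defined} and \emph{bounded} on that subspace: if $\nabla\phi_1=\nabla\phi_2$ then $\phi_1-\phi_2$ is constant (here connectedness of $\Omega$ is used), and since $\int_\Omega f\,\ro^a\,dx=0$ the value $\ell(\nabla\phi)$ is unchanged; boundedness is exactly where the $(P_{p,a,b})$-Poincar\'e inequality enters. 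Indeed, since $\int_\Omega f\,\ro^a\,dx=0$ we may replace $\phi$ by $\phi-\phi_{\Omega,a}$, so by H\"older and then $(P_{p,a,b})$,
$$|\ell(\nabla\phi)|=\lf|\int_\Omega f\,(\phi-\phi_{\Omega,a})\,\ro^a\,dx\r|\le \|f\|_{L^q(\Omega,\ro^a)}\,\|\phi-\phi_{\Omega,a}\|_{L^p(\Omega,\ro^a)}\le C\|f\|_{L^q(\Omega,\ro^a)}\,\|\nabla\phi\|_{L^p(\Omega,\ro^b)}.$$

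Next I would apply the Hahn--Banach theorem to extend $\ell$ to a bounded linear functional on all of $L^p(\Omega,\ro^b)^n$ with the same norm, and then invoke the Riesz representation of the dual of $L^p(\Omega,\ro^b)^n$. The dual of $L^p$ with weight $\ro^b$ is $L^q$ with the conjugate weight: writing a general $\mathbf{g}\in L^p(\Omega,\ro^b)^n$ and pairing against $\mathbf{u}\,\ro^{?}$, one checks that there is $\mathbf{u}$ with
$$\ell(\mathbf{g})=\int_\Omega \mathbf{u}(x)\cdot\mathbf{g}(x)\,dx\qquad\text{for all }\mathbf{g}\in L^p(\Omega,\ro^b)^n,$$
and the correct integrability is $\mathbf{u}\,\ro^{-b/p}\in L^q(\Omega)^n$, i.e. $\mathbf{u}\in L^q(\Omega,\ro^{-qb/p})^n$, with $\|\,\mathbf{u}\,\|_{L^q(\Omega,\ro^{-qb/p})}\le C\|f\|_{L^q(\Omega,\ro^a)}$ — the exponent $-qb/p$ is forced by solving $(\ro^b)\cdot(\ro^{-b/p})^{?}$ appropriately, since $p/q=p-1$ gives weight exponent $-b(p/q)^{-1}\cdot\ldots$; I would just verify $b\cdot(1-q')$-type bookkeeping carefully, with $q'=p$. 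Taking $\mathbf{g}=\nabla\phi$ recovers exactly $(\mathrm{div}_{p,a,b})$, i.e. $\mathrm{div}\,\mathbf{u}=f$ in $\mathcal{D}'(\Omega,\ro^a)$.

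At this stage I have $\mathbf{u}\in L^q(\Omega,\ro^{-qb/p})^n$ solving the divergence equation weakly, but I still need $D\mathbf{u}\in L^q(\Omega,\ro^{q-qb/p})^n$ with the stated bound. This is the main obstacle, and it is precisely the regularity-of-solutions part of \cite[Theorem 4.1]{dmrt10}: from a Poincar\'e inequality one gets not merely \emph{a} solution but a solution with a gradient estimate. Concretely, one decomposes $f$ via a Whitney covering of $\Omega$ (or via a chain argument adapted to the weight), solves the divergence equation on each Whitney cube using the classical Bogovski\u{i} operator — which gives local gradient bounds $\|D\mathbf{u}_j\|_{L^q(Q_j)}\lesssim \|f_j\|_{L^q(Q_j)}$ with constants depending only on the shape, not the size, of cubes — and then sums, using the Poincar\'e inequality (equivalently its dual form, the ``sub-representation'' or emptying/filling estimates from \cite{dmrt10}) to control the overlap terms and the error incurred in making each local piece have mean zero. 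The weight powers track through the Whitney estimate $\ro(x)\sim \operatorname{diam}(Q_j)$ on $Q_j$: the extra factor $\ro$ in going from the weight $\ro^{-qb/p}$ on $\mathbf{u}$ to $\ro^{q-qb/p}$ on $D\mathbf{u}$ reflects exactly one derivative costing one power of the Whitney scale. I would either cite \cite[Theorem 4.1]{dmrt10} directly in the form needed, or reproduce its proof in the weighted generality at hand; the bookkeeping of weight exponents through the Whitney decomposition and the summation is the only genuinely technical point, everything else being the soft duality argument above.
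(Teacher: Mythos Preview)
Your proposal is correct and follows essentially the same route as the paper: a duality/Hahn--Banach argument against $(P_{p,a,b})$ for the $L^q(\Omega,\ro^{-qb/p})$ bound on $\mathbf{u}$ (the paper cites \cite[Proposition 3.2]{dmrt10} for this), followed by a Whitney decomposition of $f\ro^a$ into mean-zero pieces, local Bogovski\u{\i} solves on the dilated cubes, and summation with the weight tracking $\ro\sim\ell(Q_j)$ (the paper cites \cite[Proposition 4.2]{dmrt10} and \cite[Theorem 2]{bb03}). One clarification worth making explicit: the Whitney--Bogovski\u{\i} step builds a \emph{new} solution $\mathbf{u}=\sum_j\mathbf{u}_j$ rather than upgrading the Hahn--Banach one, and since each $\mathbf{u}_j\in W^{1,q}_0(2Q_j)$ this $\mathbf{u}$ already satisfies both the $L^q$ and the gradient estimate, so the duality step is in fact dispensable---the paper's own proof carries the same redundancy.
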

\begin{proof} The case $a=0$ is obtained in \cite[Theorem 4.1]{dmrt10}; the proof of the case $a>0$
is essentially  same as the case $a=0$ in \cite{dmrt10}, we outline the proof here.

For $f\in  L^q_0(\Omega,\ro^a)$, by using the $(P_{p,a,b})$-Poincar\'e inequality,
similarly as  \cite[Proposition 3.2]{dmrt10}, we conclude that there exists
a solution $\mathbf{u}$ to the equation $\mathrm{div}\,\mathbf{u}=f$
in  $\mathcal{D}'(\Omega,\ro^a)$ such that
$$\|\mathbf{u}\|_{ L^q(\Omega,\ro^{-qb/p})}\le \|f\|_{L^q(\Omega,\ro^a)}.$$

Let $\{Q_j\}_j$ be a Whitney decomposition of $\Omega$.
Similar to \cite[Proposition 4.2]{dmrt10}, we obtain a decomposition of $f$ as
$$ f(x)\ro(x)^a = \sum_{j\in I}f_j(x),$$
where $\{f_j\}$ satisfies:

(i) $\supp f_j\subset 2Q_j$;

(ii) $\int_{2Q_j}f_j(x)\,dx=0$;

(iii) $\sum_{j} \int_{2Q_j}|f_j(x)|^q\ro(x)^{q-qb/p}\,dx\le C\int_\Omega |f(x)|^q\ro(x)^a\,dx$.
for some $C=C(\Omega, p,a,b)$.

For each $j$, by \cite[Theorem 2]{bb03}, there exists $\mathbf{u}_j\in
W^{1,q}_0(2Q_j)^n$
such that $\mathrm{div}\mathbf{u}_j = f_j$ and
$$\|D\mathbf{u}_j\|_{L^q(2Q_j)}\le C(q)\|f_j\|_{L^q(2Q_j)}.$$

Denote $\mathbf{u}:=\sum_{j}u_j$. Since the dilations of Whitney cubes have
bounded overlap, one easily see that $\mathrm{div}\mathbf{u}= f$ holds in
$\Omega$. Indeed, for each
$\phi\in C^\fz(\Omega)$,
$$\int_\Omega\mathrm{div}\mathbf{u}(x)\phi(x)\,dx=\sum_{j} \int_\Omega\mathbf{u}_j(x)\cdot \nabla\phi(x)\,dx=
\int_\Omega \sum_{j}f_j(x)\cdot \phi(x)\,dx=\int_\Omega f(x) \phi(x)\ro(x)^a\,dx.$$
Moreover,
by using the property of Whitney decomposition again, i.e., $\ro(x)\sim \ell(Q_j)$ for each $x\in 2Q_j$
and each $j$, we further deduce that
\begin{eqnarray*}
  \|D \mathbf{u}\|^q_{L^q(\Omega,\ro^{q-qb/p})}&&\le \sum_{j}\int_{2Q_j} |D \mathbf{u_j}(x)|^q\ro(x)^{q-qb/p}\,dx\\
  &&\le C\sum_{j}\ell(Q_j)^{q-qb/p}\int_{2Q_j} |D \mathbf{u_j}(x)|^q\,dx\\
  &&\le C\sum_{j}\ell(Q_j)^{q-qb/p}\int_{2Q_j} |f_j(x)|^q\,dx\\
  &&\le C\sum_{j}\int_{2Q_j} |f_j(x)|^q \ro(x)^{q-qb/p}\,dx\\
  &&\le C \int_{\Omega} |f(x)|^q \ro(x)^{a}\,dx,
\end{eqnarray*}
which completes the proof.
\end{proof}

\begin{proof}[Proof of Theorem \ref{t2.1}]
Recall that  $D\mathbf{v}=(\frac{\partial v_i}{\partial x_j})_{1\le i,j\le n}$, $1\le i,j\le n$,
and $\epsilon(\mathbf{v})=(\epsilon_{i,j}(\mathbf{v}))_{1\le i,j\le n}$ with
$$\epsilon_{i,j}=\frac 12(\frac{\partial v_i}{\partial x_j}+\frac{\partial v_j}{\partial x_i})$$
and the identity
$$\frac{\partial^2v_i}{\partial x_j\partial x_k}=\frac{\partial \epsilon_{i,k}}{\partial x_j}
+\frac{\partial \epsilon_{i,j}}{\partial x_k}-\frac{\partial \epsilon_{j,k}}{\partial x_k}.$$
By this and using the properties of solutions to the divergence equations (Proposition \ref{p2.1}), we see that
for each $f\in L_0^q(\Omega,\ro^a)$ following holds
\begin{eqnarray*}
 \lf|\int_\Omega f(x)\ro(x)^a\lf(\frac{\partial v_j}{\partial
x_i}(x)-\lf(\frac{\partial v_j}{\partial x_i}\r)_\Omega\r)\,dx\r|&&=
\lf|\int_\Omega \mathrm{div}\,\mathbf{u}(x) \lf(\frac{\partial v_j}{\partial
x_i}(x)-\lf(\frac{\partial v_j}{\partial x_i}\r)_\Omega\r)\,dx\r|\\
 &&= \lf|\int_\Omega \mathbf{u}(x)\cdot \nabla \frac{\partial v_j}{\partial x_i}(x)\,dx\r|\\
 &&\le \|D\mathbf{u}\|_{L^q(\Omega, \ro^{q-qb/p})}\|\epsilon(\mathbf{v})\|_{L^p(\Omega, \ro^{b-p})}\\
 &&\le C\|f\|_{L^q(\Omega,\ro^a)}\|\epsilon(\mathbf{v})\|_{L^p(\Omega, \ro^{b-p})},
\end{eqnarray*}
which implies that
\begin{eqnarray}\label{2.1}
 \lf\|\frac{\partial v_j}{\partial x_i}-\lf(\frac{\partial v_j}{\partial x_i}\r)_\Omega\r\|_{L^p(\Omega,\ro^a)}\le C\|\epsilon(\mathbf{v})\|_{L^p(\Omega, \ro^{b-p})},
\end{eqnarray}

Now for an arbitrarily fixed cube $Q\subset\subset \Omega$, we choose a $\psi\in C_0^\fz(Q)$
such that $\supp \psi \subset Q$, $\int_Q\psi\,dx=1$ and $|\nabla \psi|\le C/\ell(Q)^{n+1}$.
Write
\begin{eqnarray}\label{2.2}
\frac{\partial v_j}{\partial x_i}=\frac{\partial v_j}{\partial x_i}-\lf(\frac{\partial v_j}{\partial x_i}\r)_\Omega
+\int_Q \lf[\lf(\frac{\partial v_j}{\partial x_i}\r)_\Omega-\frac{\partial v_j}{\partial x_i}\r]\psi\,dx+
\int_Q \frac{\partial v_j}{\partial x_i}\psi\,dx.
\end{eqnarray}
Then from the H\"older inequality, we obtain
\begin{eqnarray*}
\lf| \int_Q \lf[\lf(\frac{\partial v_j}{\partial x_i}\r)_\Omega-\frac{\partial v_j}{\partial x_i}\r]\psi\,dx \r|\le
C(a,p,Q,\Omega)  \|\epsilon(\mathbf{v})\|_{L^p(\Omega, \ro^{b-p})}
\end{eqnarray*}
and
$$\lf|\int_Q \frac{\partial v_j}{\partial x_i}(x)\psi(x)\,dx\r|\le C(a,p,Q,\Omega)
\lf\|\frac{\partial v_j}{\partial x_i}\r\|_{L^p(Q,\ro^a)},$$

Combining \eqref{2.1}, \eqref{2.2} and the above estimates, we obtain that
\begin{eqnarray*}
\lf\|\frac{\partial v_j}{\partial x_i}\r\|_{L^p(\Omega,\ro^a)}\le C(p,a,b,\Omega,Q)
\lf\{\|\epsilon(\mathbf{v})\|_{L^p(\Omega, \ro^{b-p})}+\lf\|\frac{\partial v_j}{\partial x_i}\r\|_{L^p(Q,\ro^a)}\r\},
\end{eqnarray*}
%
which is
$$\lf\|D\mathbf{v}\r\|_{L^p(\Omega,\ro^a)}\le C(p,a,b,\Omega,Q)
\lf\{\|\epsilon(\mathbf{v})\|_{L^p(\Omega, \ro^{b-p})}+\lf\|D\mathbf{v}\r\|_{L^p(Q,\ro^a)}\r\}.
\leqno (\widetilde{K}_{p,a,b-p})$$
The proof is completed.
\end{proof}

\subsection{Korn inequality implies Poincar\'e inequality }
\hskip\parindent
From the previous subsection, we know that the Poincar\'e inequality implies
Korn inequality, and in this section we will prove a partial converse result.

\begin{thm}\label{t2.2}
Let $\Omega$ be a bounded domain of $\rr^n$, $n\ge 2$. Let
$p>1$ and $Q\subset \Omega$ be a closed cube.
Suppose that
for all $\mathbf{v}\in W^{1,p}(\Omega)^n$ it holds that
$$\lf\|D\mathbf{v}\r\|_{L^p(\Omega)}\le C\lf\{\|\epsilon(\mathbf{v})\|_{L^p(\Omega)}+\lf\|D\mathbf{v}\r\|_{L^p(Q)}\r\},
\leqno ({\widetilde{K}_{p}})$$
then there exists $C>0$ such that for all $u\in W^{1,p}(\Omega)$, it holds
$$\int_\Omega |u(x)-u_{\Omega}|^p\,dx \le C\int_\Omega |\nabla u(x)|^p\,dx.
\leqno({\widetilde{P}_{p}})$$
\end{thm}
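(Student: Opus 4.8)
The plan is to apply the hypothesis $(\widetilde{K}_{p})$ to a vector field manufactured from the scalar function $u$, chosen so that its symmetric gradient costs a derivative (it sees only $\nabla u$, multiplied by bounded coordinate functions) while its antisymmetric gradient reproduces $u$ itself, again modulo such lower order terms. This vector field is the only genuinely new ingredient; everything else is bookkeeping together with the classical Poincar\'e inequality on a cube.

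Concretely, given $u\in W^{1,p}(\Omega)$, set $R:=\sup_{x\in\Omega}(|x_1|+|x_2|)<\fz$ and define $\mathbf{v}=(v_1,\dots,v_n)$ by $v_1:=x_2u$, $v_2:=-x_1u$, and $v_k:=0$ for $3\le k\le n$. Since the coordinate functions are bounded and Lipschitz on $\Omega$, we have $\mathbf{v}\in W^{1,p}(\Omega)^n$, with $\partial_jv_1=\delta_{2j}u+x_2\partial_ju$, $\partial_jv_2=-\delta_{1j}u-x_1\partial_ju$, and $\partial_jv_k=0$ for $k\ge3$. Inspecting entries, each entry of $\ez(\mathbf{v})$ is a combination of terms of the form (bounded coordinate function)$\,\times\partial_ju$, whereas the antisymmetric matrix $\omega(\mathbf{v}):=D\mathbf{v}-\ez(\mathbf{v})$ vanishes except for $\omega_{12}(\mathbf{v})=-\omega_{21}(\mathbf{v})=u+\frac12(x_1\partial_1u+x_2\partial_2u)$. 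So there is $C=C(n)$ with, for a.e. $x\in\Omega$,
$$|\ez(\mathbf{v})(x)|\le CR\,|\nabla u(x)|,\qquad |D\mathbf{v}(x)|\le C\big(|u(x)|+R\,|\nabla u(x)|\big),$$
and, since $D\mathbf{v}=\ez(\mathbf{v})+\omega(\mathbf{v})$ is an orthogonal splitting in the Frobenius inner product, $|\omega(\mathbf{v})|\le|D\mathbf{v}|$ pointwise, whence
$$|u(x)|\le|\omega_{12}(\mathbf{v})(x)|+\tfrac12|x_1\partial_1u(x)+x_2\partial_2u(x)|\le C\,|D\mathbf{v}(x)|+CR\,|\nabla u(x)|.$$

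I would then apply $(\widetilde{K}_{p})$ to this $\mathbf{v}$ and chain the three pointwise bounds: raising the last one to the $p$th power and integrating, then invoking $(\widetilde{K}_{p})$, then the bound on $\ez(\mathbf{v})$ over $\Omega$ and the bound on $D\mathbf{v}$ over $Q$, gives
$$\|u\|_{L^p(\Omega)}\le C\|D\mathbf{v}\|_{L^p(\Omega)}+CR\|\nabla u\|_{L^p(\Omega)}\le C\|\ez(\mathbf{v})\|_{L^p(\Omega)}+C\|D\mathbf{v}\|_{L^p(Q)}+CR\|\nabla u\|_{L^p(\Omega)},$$
and therefore $\|u\|_{L^p(\Omega)}\le C\|\nabla u\|_{L^p(\Omega)}+C\|u\|_{L^p(Q)}$ with $C=C(n,p,\Omega,Q)$ (depending also on the constant in $(\widetilde{K}_{p})$). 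Observe that this ``Poincar\'e inequality with a fat cube term'' holds for \emph{every} $u\in W^{1,p}(\Omega)$, with no normalization imposed on $u$.

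To finish, I would carry out the standard reduction to $(\widetilde{P}_{p})$: apply the last inequality to $u-u_Q$, where $u_Q:=|Q|^{-1}\int_Qu$, and bound the remainder by the classical Poincar\'e inequality on the cube $Q$, namely $\|u-u_Q\|_{L^p(Q)}\le C_Q\|\nabla u\|_{L^p(Q)}\le C_Q\|\nabla u\|_{L^p(\Omega)}$. This yields $\|u-u_Q\|_{L^p(\Omega)}\le C\|\nabla u\|_{L^p(\Omega)}$, and since by H\"older $|u_\Omega-u_Q|\,|\Omega|^{1/p}\le\|u-u_Q\|_{L^p(\Omega)}$, we conclude
$$\|u-u_\Omega\|_{L^p(\Omega)}\le 2\|u-u_Q\|_{L^p(\Omega)}\le C\|\nabla u\|_{L^p(\Omega)},$$
which is $(\widetilde{P}_{p})$. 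The one place that requires an idea rather than a computation is the choice of $\mathbf{v}$: once one tries the rigid-rotation field $(x_2,-x_1,0,\dots,0)$ weighted by $u$, the symmetric part automatically differentiates $u$ while the rotational part retains $u$, and the absorption of the cube term into a cube-Poincar\'e estimate is then routine.
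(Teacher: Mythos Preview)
Your proof is correct and rests on the same vector field as the paper's, but the route to $(\widetilde{P}_p)$ is genuinely different.  The paper does not argue pointwise: it invokes the Maz'ya--Haj{\l}asz--Koskela capacitary characterization of the Poincar\'e inequality (Theorem~\ref{t2.3}), feeding in only the special test functions $u$ with $u|_A=1$, $u|_Q=0$; for such $u$ one has $D\mathbf{v}=0$ on $Q$ and $D\mathbf{v}$ equal to a fixed rotation on $A$, so $(\widetilde{K}_p)$ directly yields the capacity bound $|A|\le C\|\nabla u\|_{L^p(\Omega)}^p$.  Your argument instead extracts the pointwise inequality $|u|\le C|D\mathbf{v}|+CR|\nabla u|$ for \emph{arbitrary} $u$, lands on the ``Poincar\'e with a cube remainder'' $\|u\|_{L^p(\Omega)}\le C\|\nabla u\|_{L^p(\Omega)}+C\|u\|_{L^p(Q)}$, and then absorbs the remainder via the classical Poincar\'e inequality on $Q$.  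This is more self-contained---no appeal to the capacitary criterion---at the mild cost of a couple of extra triangle-inequality lines.

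One small inaccuracy: for $n\ge3$ the antisymmetric part $\omega(\mathbf{v})$ does \emph{not} vanish outside the $(1,2)$ block; one also has $\omega_{1k}(\mathbf{v})=\tfrac12 x_2\partial_k u$ and $\omega_{2k}(\mathbf{v})=-\tfrac12 x_1\partial_k u$ for $k\ge3$.  This is harmless, since the only fact you use is $|\omega_{12}(\mathbf{v})|\le |D\mathbf{v}|$, which holds regardless.
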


Notice that the Poincar\'e inequality $(\widetilde {P_{p}})$  is weaker than $(P_p)$.

We will need the following characterization of weighted  Poincar\'e inequality from
Haj{\l}asz and Koskela \cite[Theorem 1]{hk98} (for non-weighted cases see Maz'ya
\cite{maz85}).
A subset $A\subset \Omega$ is admissible if $A$ is open and $\partial A\cap \Omega$ is a smooth
submanifold.
\begin{thm}[\cite{hk98}]\label{t2.3}
  Let $\Omega$ be a bounded domain in $\rr^n$, $n\ge 2$. Let
$p\ge 1$ and  $a,b\in\rr$. Then the following conditions are
equivalent.

(i) There exists a constant $C>0$ such that, for every $u\in
C^\fz(\Omega)$ it holds that
$$\int_\Omega |u(x)-u_{\Omega,a}|^p\ro(x)^a\,dx \le C\int_\Omega |\nabla u(x)|^p\ro(x)^{b}\,dx.$$

(ii) For an arbitrary cube $Q\subset\subset \Omega$, there exists a
constant $C=C(Q)>0$ such that
\begin{equation}\label{2.3}
\int_A \ro(x)^a\,dx\le C \inf_{u}\int_\Omega |\nabla
u(x)|^p\ro(x)^{b}\,dx
\end{equation}
for every admissible set $A\subset \Omega$ with $A\cap Q=\emptyset$.
Here the infimum is taken over the set of all $u\in C^\fz(\Omega)$
that satisfy $u|_A= 1$ and $u|_Q=0.$
\end{thm}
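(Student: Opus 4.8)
\medskip\noindent The plan is to prove the two implications separately: (i)$\Rightarrow$(ii) is elementary, while (ii)$\Rightarrow$(i) is a Maz'ya-type capacitary argument and is where the work lies. (Throughout we may assume $\ro^a\,dx$ is a finite measure on $\Omega$, as otherwise (i) is void.)

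\smallskip\noindent\emph{(i)$\Rightarrow$(ii).} Fix a cube $Q\subset\subset\Omega$, so that $\ro$ lies between two positive constants on $\overline Q$, and let $A\subset\Omega$ be admissible with $A\cap Q=\emptyset$ and $u\in C^\fz(\Omega)$ with $u|_A=1$, $u|_Q=0$. If $u_{\Omega,a}\le\frac12$, then $|u-u_{\Omega,a}|\ge\frac12$ on $A$, so (i) gives $\int_A\ro^a\,dx\le2^p\int_\Omega|u-u_{\Omega,a}|^p\ro^a\,dx\le C\int_\Omega|\nz u|^p\ro^b\,dx$. If $u_{\Omega,a}>\frac12$, then $|u-u_{\Omega,a}|>\frac12$ on $Q$, so (i) gives $\int_Q\ro^a\,dx\le C\int_\Omega|\nz u|^p\ro^b\,dx$, and since $\int_A\ro^a\,dx\le\int_\Omega\ro^a\,dx=\bigl(\int_\Omega\ro^a\,dx\big/\!\int_Q\ro^a\,dx\bigr)\int_Q\ro^a\,dx$, the same bound holds with a constant depending only on $Q$, $p$, $a$, $b$, $\Omega$. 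Taking the infimum over all such $u$ yields \eqref{2.3}.

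\smallskip\noindent\emph{(ii)$\Rightarrow$(i).} By density it suffices to treat $u\in C^\fz(\Omega)$, and by Jensen's inequality it suffices to bound $\int_\Omega|u-c|^p\ro^a\,dx$ for one convenient constant $c$. Let $Q\subset\subset\Omega$ be the cube from (ii), chosen small enough that $2Q\subset\subset\Omega$, put $c:=u_{2Q}$, replace $u$ by $u-u_{2Q}$, and split $u=u_+-u_-$; by symmetry it remains to bound $\int_\Omega v^p\ro^a\,dx$ with $v:=u_+\ge0$. On $2Q$ the weights $\ro^a,\ro^b$ are comparable to positive constants, so the classical Poincar\'e inequality on the cube $2Q$ gives
$$\int_{2Q}v^p\ro^a\,dx\lesssim\int_{2Q}|u-u_{2Q}|^p\,dx\lesssim\ell(Q)^p\int_{2Q}|\nz u|^p\,dx\lesssim\int_\Omega|\nz u|^p\ro^b\,dx,$$
and the same with $\ro^b$ in place of $\ro^a$; call these bounds $(\star)$. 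To estimate $\int_{\Omega\setminus2Q}v^p\ro^a\,dx$ I would run a dyadic level-set argument. Fix $\zeta\in C^\fz(\Omega)$ with $0\le\zeta\le1$, $\zeta\equiv1$ on $\Omega\setminus2Q$, $\zeta\equiv0$ on $Q$, $|\nz\zeta|\le C/\ell(Q)$; by Sard's theorem pick $t_0\in[1,2]$ so that for $A_k:=\{v>t_02^k\}$ every $\pa A_k\cap\Omega$ is a smooth hypersurface; and set $w_k:=\min\{1,(v-t_02^{k-1})_+/(t_02^{k-1})\}$, so $w_k\equiv1$ on $A_k$, $w_k\equiv0$ on $\{v\le t_02^{k-1}\}$ and $|\nz w_k|\le C2^{-k}|\nz v|\mathbf{1}_{\{t_02^{k-1}<v<t_02^k\}}$. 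Then $w_k\zeta\in C^\fz(\Omega)$ equals $1$ on $A_k\setminus2Q$ and $0$ on $Q$, so condition (ii) applied to $A_k\setminus2Q$ (or to admissible sets approximating it from inside) gives
$$\int_{A_k\setminus2Q}\ro^a\,dx\lesssim\int_\Omega|\nz(w_k\zeta)|^p\ro^b\,dx\lesssim2^{-kp}\int_{\{t_02^{k-1}<v<t_02^k\}}|\nz v|^p\ro^b\,dx+\ell(Q)^{-p}\int_{2Q\setminus Q}w_k^p\ro^b\,dx.$$
Telescoping gives $\int_{\Omega\setminus2Q}v^p\ro^a\,dx\lesssim\sum_k2^{kp}\int_{A_k\setminus2Q}\ro^a\,dx$; plugging in the last display, the first terms sum to $\lesssim\int_\Omega|\nz v|^p\ro^b\,dx$ because the shells $\{t_02^{k-1}<v<t_02^k\}$ are pairwise disjoint, and the pointwise bound $\sum_k2^{kp}w_k(x)^p\lesssim v(x)^p$ reduces the second terms to $\ell(Q)^{-p}\int_{2Q}v^p\ro^b\,dx\lesssim\int_\Omega|\nz u|^p\ro^b\,dx$ by $(\star)$. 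Combining with the $2Q$-part, then doing the same for $u_-$ and comparing $u_{\Omega,a}$ to $u_{2Q}$ by Jensen, proves (i); if $u$ is unbounded one first truncates it at height $N$ and lets $N\to\fz$.

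\smallskip\noindent The main obstacle is that the level sets $A_k$ of $v$ need not be disjoint from $Q$, so (ii) cannot be applied to them directly. This is overcome by the two-scale splitting $\Omega=2Q\cup(\Omega\setminus2Q)$ together with the cutoff $\zeta$, which forces each competitor to vanish on $Q$ while leaving it equal to $1$ away from $Q$; the error produced by $\zeta$ lives in $2Q$, where $\ro$ is comparable to a constant and the ordinary Poincar\'e inequality applies. The remaining technical point, admissibility of the level sets, is handled by Sard's theorem after an arbitrarily small rescaling of the dyadic thresholds.
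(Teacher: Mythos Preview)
The paper does not prove this theorem; it is quoted from Haj\l asz--Koskela \cite[Theorem~1]{hk98} (with Maz'ya \cite{maz85} as the unweighted antecedent), and the text immediately passes to the proof of Theorem~\ref{t2.2}. So there is no in-paper proof to compare against.

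Your argument is the standard Maz'ya capacitary approach and is essentially the one in \cite{hk98}: (i)$\Rightarrow$(ii) by the two-case average argument, and (ii)$\Rightarrow$(i) via dyadic level sets of $u_\pm$ combined with a cutoff near a fixed cube to force the competitor to vanish on $Q$. Two small points to tighten. First, you assert $w_k\zeta\in C^\infty(\Omega)$, but $w_k$ is built from $(\,\cdot\,)_+$ and $\min$ applied to $v=u_+$, so $w_k\zeta$ is only Lipschitz; you need either a routine mollification or the (easy) remark that \eqref{2.3} extends to Lipschitz competitors by $C^\infty$ approximation. Second, $A_k\setminus 2Q$ has corners along $\partial(2Q)\cap A_k$ and is not literally admissible; your parenthetical ``approximate from inside'' is the correct fix and would be worth one sentence (or replace $2Q$ by a smooth ball and perturb the radius to make the intersection with $\partial A_k$ transverse). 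Modulo these technicalities the proof is correct.
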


We next prove Theorem \ref{t2.2}.

\begin{proof}[Proof of Theorem \ref{t2.2}]
We only need to verify that the second condition of Theorem \ref{t2.3} holds.
Assume that (${\widetilde{K}_{p}}$) holds. Fix a $y=(y_1,y_2,\cdots,y_n)\in\Omega$.

Let $A\subset \Omega$ with $A\cap Q=\emptyset$ be an admissible set, and
$u\in C^\fz(\Omega)$ that satisfies $u|_A= 1$ and $u|_Q=0.$

For each $x=(x_1,\cdots,x_n)\in \Omega$, let $\mathbf{v}=(v_1,v_2,0,\cdots,0)$ with
$$\lf\{
\begin{array}{ccc}
v_1(x_1,\cdots,x_n)&=&(x_2-y_2)u(x_1,\cdots,x_n),\\
v_2(x_1,\cdots,x_n)&=&(y_1-x_1)u(x_1,\cdots,x_n),\\
\end{array}
\r.
$$
Then for each $x=(x_1,\cdots,x_n)\in A$,
\begin{eqnarray*} D\mathbf{v}(x)=\lf(
\begin{array}{ccccc}
0 &  1 & 0& \cdots & 0\\
-1 & 0 & 0&\cdots & 0\\
0 & 0 & 0&\cdots & 0\\
\cdots\\
0 & 0 & 0&\cdots & 0
\end{array}
\r),
\end{eqnarray*}
and for $x\in Q$, $D\mathbf{v}(x)=0$. These imply that
$$\lf\|D\mathbf{v}\r\|_{L^p(\Omega)}^p\ge \int_A \,dx$$
and
$\|D(\mathbf{v})\|_{L^p(Q)}=0$. On the other hand, for every $x=(x_1,x_2,\cdots,x_n)\in \Omega$, it holds
\begin{eqnarray*}  D\mathbf{v}(x)=\lf(
\begin{array}{ccccc}
(x_2-y_2)\frac{\partial u}{\partial x_1} &  \ u+(x_2-y_2) \frac{\partial u}{\partial x_2}& (x_2-y_2)\frac{\partial u}{\partial x_3}&\cdots &(x_2-y_2)\frac{\partial u}{\partial x_n}\\
-u+(y_1-x_1)\frac{\partial u}{\partial x_1} & (y_1-x_1)\frac{\partial u}{\partial x_2}& (y_1-x_1)\frac{\partial u}{\partial x_3}&\cdots &
(y_1-x_1) \frac{\partial u}{\partial x_n}\\
0 & 0& 0&\cdots &0\\
\cdots\\
0 & 0& 0&\cdots &0
\end{array}
\r),
\end{eqnarray*}
which implies that  
$$\|\epsilon(\mathbf{v})\|_{L^p(\Omega)}\le C\|\,|\nabla u| (\cdot-y)\|_{L^p(\Omega)}
\le C\diam(\Omega)\|\nabla u\|_{L^p(\Omega)}.$$
The Korn inequality (${\widetilde{K}_{p}}$) implies that
\begin{eqnarray*}
\int_A\,dx&&\le \lf\|D\mathbf{v}\r\|^p_{L^p(\Omega)}\le C
\lf\{\|\epsilon(\mathbf{v})\|_{L^p(\Omega)}+\lf\|D(\mathbf{v})\r\|_{L^p(Q)}\r\}^p\nonumber\\
&& \le C\int_\Omega |\nabla u(x)|^p\,dx,
\end{eqnarray*}
for every $u\in C^\fz(\Omega)$ that satisfies $u|_A= 1$ and $u|_Q=0.$
Then the Poincar\'e inequality ($\widetilde{P}_{p}$) holds by using Theorem \ref{t3.2}.
\end{proof}

\begin{rem}\rm
Similarly, if the following Korn inequality
 $$\lf\|D\mathbf{v}\r\|_{L^p(\Omega)}\le C(\Omega,Q)
\lf\{\|\epsilon(\mathbf{v})\|_{L^p(\Omega)}+\lf\|\mathbf{v}\r\|_{L^p(Q)}\r\},
$$
holds for some cube $Q\subset\subset \Omega$, then the $({\widetilde{P}_{p}})$-Poincar\'e inequality also holds.
\end{rem}

\begin{rem}\rm
Theorem \ref{t2.2} can be generalized to the weighted cases by similar proofs as:
if the Korn inequality
$$\int_\Omega|D\mathbf{v}(x)|^p\ro(x)^a\,dx\le C\lf\{\int_\Omega| \ez(\mathbf{v})(x)|^p\ro(x)^{b-p}\,dx+\int_Q| D \mathbf{v}(x)|^p\ro(x)^{a}\,dx\r\}.\leqno{(\widetilde{K}_{p,a,b-p})}$$
holds for some $Q\subset\subset \Omega$, then the weighted Poincar\'e inequality
$$\int_\Omega |u(x)-u_{\Omega,a}|^p\ro(x)^a\,dx \le C\int_\Omega |\nabla u(x)|^p\ro(x)^{b-p}\,dx
$$
holds.
\end{rem}

\section{Korn inequality on general domains}

\hskip\parindent In this section, we are going to study the Korn inequality on some irregular domains.

If $\Omega$ is an $\az$-H\"older domain for some $\az\in (0,1] $, it is then proved in \cite{adl06}
that there is a constant $C=C(n,p,\Omega,\az)>0$ such that
for every $\mathbf{v}\in W^{1,p}(\Omega,\ro^a)^n$, it holds
\begin{equation*}
\int_\Omega|D\mathbf{v}|^p\ro^a\,dx\le C\lf\{\int_\Omega
|\ez(\mathbf{v})|^p\ro^{b-p}\,dx+\int_\Omega| \mathbf{v}|^p\ro^{a}\,dx\r\},
\end{equation*}
 where $0\le a=b-\az p$. See \cite{adl06,adf13} for more on this aspect and the
counterexample for sharpness.

We next focus on two kinds of irregular domains: $s$-John domains and quasi-hyperbolic domains.

\subsection{$s$-John domains}
\hskip\parindent Let us first recall the definition of $s-$John domain.

\begin{defn}[$s$-John domain]
A bounded domain $\Omega\subset \rn$ with a distinguished point
$x_0\in\Omega$ called an $s$-John domain, $s\ge 1$, if there exists a constant $C>0$ such that
for all $x\in\Omega$, there is a curve $\gz: [0,l]\to\Omega$
parametrised by arclength such that $\gz(0)=x$, $\gz(l)= x_0$, and
$d(\gz(t),\rn\setminus\Omega)\ge Ct^s$.
\end{defn}

If $s=1$ then we say that $\Omega$ is a John domain for simplicity.
John domains were introduced by Martio and Sarvas \cite{ms78}, F. John \cite{j61} had earlier
considered a similar class of domains.

The following Poincar\'e inequality is a special case from \cite{hk98,km00}.
\begin{thm}\label{t3.1}
Suppose that $\Omega$ is an $s$-John domain, $s\ge 1$ and $a\ge 0$.
Then there is a constant $C=C(n,p,\Omega,a,b)>0$ such that
$$\lf(\int_\Omega|u-u_{\Omega,a}|^p\ro^a\,dx\r)^{1/p}\le
C\lf(\int_\Omega|\nabla u|^p\ro^b\,dx\r)^{1/p}$$
for each $u\in C^\fz(\Omega)$, where
$n+a\ge s(n+b-1)-p+1.$
\end{thm}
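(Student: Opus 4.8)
The statement to be proved is Theorem~\ref{t3.1}, the weighted Poincar\'e inequality on $s$-John domains with the exponent relation $n+a\ge s(n+b-1)-p+1$. The natural strategy is to invoke the characterization of weighted Poincar\'e inequalities via capacitary-type conditions, namely Theorem~\ref{t2.3} (from Haj{\l}asz--Koskela), and reduce the problem to verifying the geometric condition \eqref{2.3}: for every admissible set $A\subset\Omega$ avoiding a fixed cube $Q\subset\subset\Omega$, one must bound $\int_A\ro^a\,dx$ by a constant times $\inf_u\int_\Omega|\nabla u|^p\ro^b\,dx$, the infimum over $u\in C^\fz(\Omega)$ with $u|_A=1$, $u|_Q=0$. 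Alternatively, and perhaps more directly, one can follow the chaining/pointwise-estimate approach: using the $s$-John curves, one obtains for each $x\in\Omega$ a chain of Whitney-type cubes $Q_0=Q_{x_0},Q_1,\dots,Q_{N(x)}\ni x$ with $\ell(Q_j)$ comparable along the chain to $\ro$ of its points, controlled overlap, and the key $s$-John volume estimate $\sum_{j:\,Q\subset\text{chain}(x)}\chi_Q(y)\lesssim \ro(y)^{1-s(n-1)/\cdots}$-type bound that encodes how fat the ``shadow'' of a Whitney cube is.

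\textbf{Key steps.} First I would fix the distinguished point $x_0$ and a small cube $Q\ni x_0$, take a Whitney decomposition $\{Q_j\}$ of $\Omega$, and for each $x\in\Omega$ select a chain of cubes along the $s$-John curve from $x$ to $x_0$; standard arguments (as in \cite{hk98,km00}) give that consecutive cubes in the chain touch, that $\ell(Q_j)\sim\dist(Q_j,\partial\Omega)$, and that the number of chains passing through a given cube $R$ is controlled by $(\ell(R_0)/\ell(R))^{?}$ with the exponent governed by $s$ and $n$. Second, I would prove the pointwise estimate $|u(x)-u_Q|\lesssim \sum_{j\in\text{chain}(x)}\ell(Q_j)\,\fint_{Q_j}|\nabla u|$, which is the telescoping/Poincar\'e-on-each-cube inequality. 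Third, raising to the $p$-th power, multiplying by $\ro(x)^a$, integrating in $x$, and using the overlap count together with H\"older's inequality (with weights $\ro^b$ inside and the weight $\ro^a$ outside), the condition $n+a\ge s(n+b-1)-p+1$ is exactly what makes the resulting geometric series of the Whitney-cube side lengths converge. Finally, absorb the $u_Q$ versus $u_{\Omega,a}$ discrepancy in the usual way: for $a\ge0$ the average $u_{\Omega,a}$ is within a constant of $u_Q$ in $L^p(\ro^a)$ by a standard argument, yielding the stated inequality.

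\textbf{Main obstacle.} The technical heart is the counting/overlap estimate for the chains together with the bookkeeping of the weight exponents: one needs the precise statement that $\sum_{x}$ (i.e.\ after integration) the contribution of a fixed Whitney cube $R$ of side $\ell(R)$ enters with a factor whose power of $\ell(R)$ is $n + a - p\cdot(\text{something}) - \text{(shadow volume exponent depending on }s)$, and showing this exponent is $\ge 0$ precisely under $n+a\ge s(n+b-1)-p+1$. Since this is quoted as ``a special case from \cite{hk98,km00}'', the cleanest route in the paper is surely to \emph{cite} those references and merely point out that the hypotheses there specialize to the stated exponent condition; the substantive work is checking that specialization. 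I expect the author's proof to be essentially this: quote the general weighted Poincar\'e inequality on $s$-John domains from \cite{hk98} or \cite{km00} and observe that the admissible range of $(a,b)$ there reduces, in this setting, to $n+a\ge s(n+b-1)-p+1$.
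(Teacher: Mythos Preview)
Your assessment is correct: the paper gives no proof of Theorem~\ref{t3.1} at all, introducing it explicitly as ``a special case from \cite{hk98,km00}'' and immediately using it. Your final expectation---that the author simply cites the general result and specializes the exponents to $n+a\ge s(n+b-1)-p+1$---is exactly what happens, and the chaining argument you outline is indeed the content of those references.
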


We have the corresponding Korn inequality on $s$-John domains.
\begin{thm}\label{t3.2}
Suppose that $\Omega$ is an $s$-John domain with $s\ge 1$, and $a\ge 0$.
Then there is a constant $C=C(n,p,\Omega,a,b)>0$ such that
for every $\mathbf{v}\in W^{1,p}(\Omega,\ro^a)^n$, it holds
$$\int_\Omega|D\mathbf{v}|^p\ro^a\,dx\le C\lf\{\int_\Omega
|\ez(\mathbf{v})|^p\ro^{b-p}\,dx+\int_\Omega| \mathbf{v}|^p\ro^{a}\,dx\r\},\leqno({K}_{p,a,b-p})$$
 where $a\ge 0$ and $n+a\ge s(n+b-1)-p+1.$

 Moreover, for $a\ge 0$ and $n+a<s(n+b-1)-p+1$, there exists a domain $\Omega$
which does not support the Korn inequality $(K_{p,a,b-p}).$
\end{thm}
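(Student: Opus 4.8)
\textbf{Proof proposal for Theorem \ref{t3.2}.}

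The plan is to split the statement into its positive part (the Korn inequality when $n+a\ge s(n+b-1)-p+1$) and its sharpness part (existence of a counterexample when $n+a<s(n+b-1)-p+1$). For the positive part, I would simply feed Theorem \ref{t3.1} into Theorem \ref{t2.1}. Indeed, under the hypothesis $n+a\ge s(n+b-1)-p+1$ the $(P_{p,a,b})$-Poincar\'e inequality holds on $\Omega$ by Theorem \ref{t3.1}, so Theorem \ref{t2.1} yields $(\widetilde K_{p,a,b-p})$ for any fixed cube $Q\subset\subset\Omega$. To upgrade the local term $\|D\mathbf{v}\|_{L^p(Q,\ro^a)}$ to $\|\mathbf{v}\|_{L^p(\Omega,\ro^a)}$ I would argue exactly as in Remark \ref{r2.1}: since $Q\subset\subset\Omega$, $\ro$ is bounded above and below by positive constants on $Q$, so the classical Korn inequality $(K_p)$ on the cube gives $\|D\mathbf{v}\|_{L^p(Q,\ro^a)}\le C(\|\epsilon(\mathbf{v})\|_{L^p(Q)}+\|\mathbf{v}\|_{L^p(Q)})\le C(\|\epsilon(\mathbf{v})\|_{L^p(\Omega,\ro^{b-p})}+\|\mathbf{v}\|_{L^p(\Omega,\ro^a)})$, using $\ro\sim 1$ on $Q$ once more. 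Combining gives $(K_{p,a,b-p})$.

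For the sharpness part, I would exhibit a standard cusp/mushroom-type $s$-John domain in $\rr^n$ — e.g. a rooms-and-passages domain or a domain of the form $\Omega=\{(x',x_n): 0<x_n<1,\ |x'|<x_n^s\}$ together with a bounded reservoir — on which the Poincar\'e inequality $(P_{p,a,b})$ is known to fail precisely when $n+a<s(n+b-1)-p+1$ (this is the sharpness statement accompanying Theorem \ref{t3.1} in \cite{hk98,km00}). The idea is then to transfer the failure of Poincar\'e to a failure of Korn using the construction in the proof of Theorem \ref{t2.2}: given a function $u$ that makes the Poincar\'e quotient large, set $\mathbf{v}=(v_1,v_2,0,\dots,0)$ with $v_1=(x_2-y_2)u$, $v_2=(y_1-x_1)u$ for a fixed $y\in\Omega$. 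Then $|\epsilon(\mathbf{v})|\lesssim |\nabla u|\,|\cdot-y|\lesssim\diam(\Omega)|\nabla u|$ pointwise, while $|D\mathbf{v}|\gtrsim |u|$ away from the support of $\nabla u$; choosing $u$ supported suitably (e.g. $u$ a cutoff that is $1$ on a deep part of the cusp and $0$ on a fixed cube $Q$, or more generally $u=u-u_{\Omega,a}$ rescaled) and inserting into $(K_{p,a,b-p})$ forces a bound of the form $\|\,|u|\,\|_{L^p(\Omega,\ro^a)}\lesssim\|\nabla u\|_{L^p(\Omega,\ro^{b-p})}$, contradicting the failure of $(P_{p,a,b})$ for such $u$. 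One must be slightly careful that $\|D\mathbf{v}\|_{L^p(\Omega,\ro^a)}$ genuinely controls $\|u\|_{L^p(\Omega,\ro^a)}$ and not merely $\|\nabla u\|$; this is handled because the entries $u+(x_2-y_2)\partial_2 u$ and $-u+(y_1-x_1)\partial_1 u$ of $D\mathbf{v}$, combined with the already-controlled $\|\nabla u\|$-type terms, isolate $u$ itself.

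The main obstacle I anticipate is the sharpness direction: turning the known sharpness of the weighted Poincar\'e inequality on $s$-John domains into an \emph{explicit} family of vector fields violating Korn. The subtlety is twofold — first, the test function $u$ produced by the Poincar\'e sharpness argument must be taken (or modified) so that it vanishes on a fixed cube $Q$, so that the $\|\mathbf{v}\|_{L^p(Q)}$ (or $\|D\mathbf{v}\|_{L^p(Q)}$) term on the right-hand side is negligible; second, one needs the pointwise domination $|\epsilon(\mathbf{v})|\lesssim\diam(\Omega)|\nabla u|$ to be lossless enough at the relevant weight $\ro^{b-p}$, which it is, since multiplication by the bounded factor $(x-y)$ does not change the weighted norm by more than a constant. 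A clean way to organize this is to state and use the contrapositive of Theorem \ref{t2.2}'s weighted version (cited in the final remark of Section 2): if $(P_{p,a,b})$ fails then $(\widetilde K_{p,a,b-p})$ — hence a fortiori $(K_{p,a,b-p})$ — must fail on the same domain. Then the whole sharpness claim reduces to quoting the known sharpness of Theorem \ref{t3.1} from \cite{hk98,km00}, and the proof is short.
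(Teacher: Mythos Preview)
Your positive part is fine and matches the paper exactly: Theorem~\ref{t3.1} feeds into Theorem~\ref{t2.1}, and Remark~\ref{r2.1} converts $(\widetilde K_{p,a,b-p})$ to $(K_{p,a,b-p})$.

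The sharpness argument, however, has a genuine gap: a weight mismatch that makes your range strictly weaker than the one claimed. Trace the weights in your construction $\mathbf{v}=((x_2-y_2)u,(y_1-x_1)u,0,\dots,0)$. You correctly note $|\epsilon(\mathbf{v})|\lesssim \diam(\Omega)|\nabla u|$ pointwise, so inserting into $(K_{p,a,b-p})$ yields
\[
\|u\|_{L^p(\Omega,\ro^a)}\lesssim \|\nabla u\|_{L^p(\Omega,\ro^{\,b-p})}.
\]
But this is $(P_{p,a,b-p})$, not $(P_{p,a,b})$ --- and this is exactly what the weighted version of Theorem~\ref{t2.2} (the remark you cite) actually says. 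Since $\ro\le\diam(\Omega)$ implies $\ro^{b-p}\ge c\,\ro^{b}$, the inequality $(P_{p,a,b-p})$ is \emph{weaker} than $(P_{p,a,b})$; failure of $(P_{p,a,b})$ does not contradict $(P_{p,a,b-p})$. Concretely, quoting the sharpness of Theorem~\ref{t3.1} with $b$ replaced by $b-p$, your method only shows $(K_{p,a,b-p})$ fails when
\[
n+a< s(n+(b-p)-1)-p+1=s(n+b-1)-sp-p+1,
\]
which for $s\ge1$ is a strictly smaller range than the asserted $n+a<s(n+b-1)-p+1$.

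The paper closes this gap not by appealing to Theorem~\ref{t2.2} but by an explicit rooms-and-corridors construction (Example~4.1). On each corridor $C_i$ the test field $\mathbf{u}_i$ is chosen so that $\epsilon(\mathbf{u}_i)(x,y)$ has only the single nonzero entry $2(x-x_i)/r_i^{\tau}$, which \emph{vanishes at the centre of the corridor} and grows toward the walls. This correlation between $|\epsilon(\mathbf{u}_i)|$ and $\ro$ is what gains the missing factor: one computes
\[
\int_{C_i}\Big(\frac{|x-x_i|}{r_i^{\tau}}\Big)^p\ro^{b-p}\,dx\,dy\sim r_i^{\sigma(b+1)+\tau(1-p)},
\]
whereas your construction, with $|\epsilon(\mathbf{v})|\sim |\nabla u|\sim r_i^{-\tau}$ constant across the corridor, gives the larger $r_i^{\sigma(b-p+1)+\tau(1-p)}$. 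The extra $r_i^{\sigma p}$ is exactly the $sp$ discrepancy above. So for the sharpness direction you need a tailored vector field, not the generic one coming from a scalar cutoff.
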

\begin{proof} By using Theorem \ref{t2.1} and the Poincar\'e inequality (Theorem \ref{t3.1}),
we see that the Korn inequality $(K_{p,a,b-p})$ holds if $n+a\ge s(n+b-1)-p+1.$
The converse part follows from the Example 4.1(1) in Section 4.
\end{proof}

\begin{rem}\rm
For the case $s=1$, i.e., on the John domain, we can then take $a=0$ and $b=p$, and obtain
the usual Korn inequality. This gives an another proof of \cite[Theorem
4.2]{adm06}
\end{rem}

\subsection{Quasihyperbolic domains}
\hskip\parindent
Let $\Omega$ be a proper domain in $\rn$, $n\ge 2$. By quasihyperbolic metric we mean
that for all $x,y\in \Omega$,
\begin{equation*}
 k(x,y):= \inf_{\gamma} \int_{\gamma} \frac{1}{\mathrm{dist}(z, \partial
\Omega)}\,ds(z),
\end{equation*} where the infimum is taken over all curves $\gamma$ joining $x$ to
$y$ in $\Omega$.
 The quasihyperbolic metric arises naturally in the theory of conformal geometry and plays
an important role for example in the study of the boundary behavior of
quasiconformal maps.

 Our domain $\Omega$ is said to satisfy a $\beta$-quasihyperbolic boundary
condition (for short, $\bz$-QHBC), if for some fixed base point $x_0$ there
exists $C_0<\infty$ such that for every $x\in\Omega$
\begin{equation*}
 k(x,x_0) \leq \frac{1}{\beta} \log \frac{\mathrm{dist}(x_0, \partial
\Omega)}{\mathrm{dist}(x,
\partial \Omega)} + C_0.
\end{equation*} Changing the base point $x_0$ changes the constant $C_0$.

We first establish the following weighted Poincar\'e inequality
on these domains; for non-weighted cases see \cite{SS90,kot02,jkk12},
and recent paper \cite{hmv12} for $(q,p)$-Poincar\'e inequality with $q<p$.

Let $\mathcal{W}$ be a Whitney decomposition of $\Omega$. We may and do assume that the
basepoint $x_0$ is the center of some $Q\in \mathcal{W}.$  For each $Q\in
\mathcal{W}$, we choose a quasihyperbolic geodesic $\gamma$ joining $x_0$ to the
center of $Q$ and let $P(Q)$ denote the collection of all of Whitney cubes that intersect
$\gamma$. The shadow of the cube $Q\in\mathcal{W}$ is the set
$$
S(Q):=\bigcup_{\substack{Q_1\in \mathcal{W}\\  Q\in P(Q_1)}} Q_1.
$$

We have the following estimate for the shadow of a cube from \cite{jkk12}.

\begin{lem}[\cite{jkk12}]\label{l3.1} Let $\Omega$ satisfy the
$\beta$-quasihyperbolic boundary condition, for some
$\beta\leq1$.
 There exists a constant $C=C(n,C_0)$ such
that for all $Q\in \mathcal{W}$
\begin{equation*}
 \mathrm{diam}(S(Q))\leq C\,
\mathrm{dist}(x_0,\partial\Omega)^{\frac{1-\beta}{1+\beta}}\,
\mathrm{diam}(Q)^{ \frac{2\beta }{ 1+\beta
}}.
\end{equation*}
\end{lem}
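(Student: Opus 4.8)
Write $\ro(z)=\dist(z,\partial\Omega)$ and, for a Whitney cube $R$, let $z_R$ be its center; I will use freely that $\ro(z)\sim_n\ell(R)\sim_n\diam(R)$ for $z\in R$, that the quasihyperbolic diameter of a Whitney cube is bounded in terms of $n$ alone, and that $k(x,y)\ge|\log(\ro(x)/\ro(y))|$ for all $x,y\in\Omega$. The first step is to reduce the estimate to a bound for Euclidean lengths of geodesic sub-arcs. If $Q\in P(Q_1)$, then the quasihyperbolic geodesic $\gamma_1$ from $x_0$ to $z_{Q_1}$ meets $Q$ at some point $w=w(Q_1)$; let $\gamma_1'$ be the portion of $\gamma_1$ from $w$ to $z_{Q_1}$ and $L_1$ its Euclidean length. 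Since $\ro$ is $1$-Lipschitz, $\diam(Q_1)\sim_n\ro(z_{Q_1})\le\ro(w)+L_1\lesssim_n\diam(Q)+L_1$; bounding, for $Q_1,Q_1'\in S(Q)$, the quantity $\diam(Q_1\cup Q_1')$ by $\diam(Q_1)+L_1+\diam(Q)+L_1'+\diam(Q_1')$ (triangle inequality through $z_{Q_1},w,w',z_{Q_1'}$) then gives $\diam(S(Q))\lesssim_n\diam(Q)+\sup\{L_1:\,Q\in P(Q_1)\}$. I would also record the elementary consequence of the $\beta$-QHBC that $\ro(z_R)\le e^{C_0}\ro(x_0)$ for every Whitney cube $R$ (apply the QHBC at $z_R$, use $k(x_0,z_R)\ge\log(\ro(z_R)/\ro(x_0))$ and $\beta\le1$); so $\diam(Q)\lesssim_{n,C_0}\ro(x_0)$, and the first term above is already dominated by $\ro(x_0)^{(1-\beta)/(1+\beta)}\diam(Q)^{2\beta/(1+\beta)}$. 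It therefore suffices to bound $L_1$.

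Fix $Q_1$ with $Q\in P(Q_1)$ and parametrize $\gamma_1'$ by quasihyperbolic arclength $\ell\in[0,K]$, $K=k(w,z_{Q_1})$, with $\gamma_1'(0)=w$. I will bound $\ro(\gamma_1'(\ell))$ in two ways and interpolate. First, if $T(\ell)$ denotes the Euclidean length of $\gamma_1'$ over $[0,\ell]$, then $T'(\ell)=\ro(\gamma_1'(\ell))\le\ro(w)+T(\ell)$ by Lipschitzness of $\ro$, so Gronwall gives $\ro(\gamma_1'(\ell))\le\ro(w)e^{\ell}$. Second, a sub-arc of a geodesic is a geodesic, so $k(x_0,\gamma_1'(\ell))=k(x_0,w)+\ell$, and the $\beta$-QHBC applied at $\gamma_1'(\ell)$ gives $\ro(\gamma_1'(\ell))\le A\,e^{-\beta\ell}$ with $A:=\ro(x_0)e^{\beta C_0}e^{-\beta k(x_0,w)}$. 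Consequently $L_1=T(K)=\int_0^K\ro(\gamma_1'(\ell))\,d\ell\le\int_0^\infty\min\{\ro(w)e^{\ell},\,Ae^{-\beta\ell}\}\,d\ell$, and splitting this integral at the crossover point $\ell^\ast=\tfrac{1}{1+\beta}\log(A/\ro(w))$ yields $L_1\le(1+\tfrac1\beta)A^{1/(1+\beta)}\ro(w)^{\beta/(1+\beta)}$. Since $k(x_0,w)\ge\log(\ro(x_0)/\ro(w))$, we have $A\le e^{\beta C_0}\ro(x_0)^{1-\beta}\ro(w)^{\beta}$, and inserting this together with $\ro(w)\sim_n\diam(Q)$ gives $L_1\lesssim\ro(x_0)^{(1-\beta)/(1+\beta)}\diam(Q)^{2\beta/(1+\beta)}$, as needed.

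The step I expect to be the real obstacle is getting the \emph{sharp} exponent $2\beta/(1+\beta)$. The QHBC bound $\ro(\gamma_1'(\ell))\le Ae^{-\beta\ell}$ alone yields only $L_1\lesssim A/\beta\lesssim\ro(x_0)^{1-\beta}\diam(Q)^{\beta}$, which is too weak for $\beta<1$; the gain comes precisely from interpolating against the Lipschitz bound $\ro(w)e^{\ell}$, which is genuinely smaller on the initial range where $\ro(w)\ll A$. The remaining ingredients — existence of quasihyperbolic geodesics in a proper domain, the Whitney-cube comparabilities, and the estimate $k(x,y)\ge|\log(\ro(x)/\ro(y))|$ — are classical, and the leftover computations are elementary.
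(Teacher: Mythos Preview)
The paper does not prove this lemma; it is quoted from \cite{jkk12} without proof, so there is no argument here to compare against. Your proof is correct and self-contained: the reduction of $\diam(S(Q))$ to $\diam(Q)+\sup L_1$ via the triangle inequality through the two geodesic hitting points in $Q$ is clean, and the key step --- bounding $\rho(\gamma_1'(\ell))$ by $\min\{\rho(w)e^{\ell},\,Ae^{-\beta\ell}\}$ (Gronwall from the $1$-Lipschitzness of $\rho$ on one side, the QHBC applied at $\gamma_1'(\ell)$ together with additivity of $k$ along geodesics on the other) and then integrating with a split at the crossover $\ell^{\ast}$ --- is exactly what produces the sharp exponent $2\beta/(1+\beta)$. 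Your own diagnosis that the QHBC bound alone would only give the weaker exponent $\beta$ is right.

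One minor discrepancy worth flagging: your final constant carries a factor $(1+1/\beta)$ from the integral $\int_{\ell^\ast}^\infty Ae^{-\beta\ell}\,d\ell$, so you actually prove the estimate with $C=C(n,C_0,\beta)$ rather than the $C=C(n,C_0)$ stated in the lemma. This is harmless for every use of the lemma in the present paper (where $\beta$ is fixed throughout Section~3.2), and the claimed $\beta$-independence may well be a transcription slip from \cite{jkk12}.
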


\begin{thm}\label{qhbcpoincare}
 Let $\Omega\subset \rr^n$ be a proper subdomain satisfying a
$\beta$-quasihyperbolic boundary condition, for some $\beta\leq 1.$ Then
there is a constant $C=C(n,p,q,\bz,\Omega)>0$ such that
$$\lf(\int_\Omega|u-u_{\Omega,a}|^q\ro^a\,dx\r)^{1/p}\le
C\lf(\int_\Omega|\nabla u|^p\ro^b\,dx\r)^{1/p}$$
for each $u\in C^\fz(\Omega)$, where
$ 1\le p\le q<\fz$, $a\ge 0$,
$$\frac{a+n}q\frac{2\beta}{1+\beta}+\frac{p-n-b}{p}>0;$$
additionally, $q\le \frac{np}{n-p}$ if $p<n$.
\end{thm}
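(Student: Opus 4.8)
The plan is to verify condition (ii) of Theorem~\ref{t2.3} (the Haj{\l}asz--Koskela characterization), applied with the weight exponents $a$ and $b$, using the decomposition of $\Omega$ into Whitney cubes together with the shadow estimate of Lemma~\ref{l3.1}. Fix an arbitrary cube $Q\subset\subset\Omega$; without loss of generality we may take it to be (or contain) the Whitney cube centered at the basepoint $x_0$. Let $A\subset\Omega$ be admissible with $A\cap Q=\emptyset$, and let $u\in C^\fz(\Omega)$ satisfy $u|_A=1$, $u|_Q=0$. We must bound $\int_A\ro^a\,dx$ by $C\int_\Omega|\nabla u|^p\ro^b\,dx$, with $C$ independent of $A$ and $u$. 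Actually, since the theorem asserts a $(q,p)$-inequality rather than the $(p,p)$-inequality literally covered by Theorem~\ref{t2.3}, I would instead invoke the $(q,p)$-version of the Haj{\l}asz--Koskela criterion (it is proved in exactly the same way in \cite{hk98}): the target becomes $\big(\int_A\ro^a\,dx\big)^{p/q}\le C\int_\Omega|\nabla u|^p\ro^b\,dx$.

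Next I would run the standard chaining argument. For each Whitney cube $Q'$ meeting $A$, the chain $P(Q')$ of Whitney cubes from $Q'$ back to $Q$ must contain some cube on which $u$ passes from near $1$ to $0$; more precisely, telescoping averages $u_{Q_i}$ along the chain and using the local Poincar\'e inequality on each cube (with the comparability $\ro(x)\sim\ell(Q_i)$ there) gives
\begin{equation*}
1\le C\sum_{Q_i\in P(Q')}\ell(Q_i)^{1-n/p}\,\ro(Q_i)^{-b/p}\Big(\int_{Q_i}|\nabla u|^p\ro^b\,dx\Big)^{1/p}.
\end{equation*}
One then sums $\int_{Q'}\ro^a\,dx\sim\ell(Q')^n\ro(Q')^a$ over all $Q'$ meeting $A$, interchanges the order of summation so that each Whitney cube $Q_i$ is weighted by the measure of its shadow $S(Q_i)$ (the union of all $Q'$ whose chain passes through $Q_i$), and invokes Lemma~\ref{l3.1} to control $\diam(S(Q_i))$, hence $|S(Q_i)|$ and the corresponding $\int_{S(Q_i)}\ro^a$, by $\ro(x_0)^{\frac{(1-\beta)(n+a)}{1+\beta}}\ell(Q_i)^{\frac{2\beta(n+a)}{1+\beta}}$ up to constants. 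Combining these with H\"older's inequality (using $p\le q$) and the exponent arithmetic, the factor attached to each $Q_i$ works out to $\ell(Q_i)$ raised to a power that is nonnegative precisely under the hypothesis
\begin{equation*}
\frac{a+n}{q}\,\frac{2\beta}{1+\beta}+\frac{p-n-b}{p}>0,
\end{equation*}
which lets us bound that power of $\ell(Q_i)$ by a constant (since $\Omega$ is bounded) and collapse the sum to $C\int_\Omega|\nabla u|^p\ro^b\,dx$. The dimensional restriction $q\le np/(n-p)$ when $p<n$ enters to make the single-cube Sobolev--Poincar\'e step at the $(q,p)$ level valid.

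The main obstacle is the bookkeeping in the double sum: one has to show that the total overlap of the shadows is controlled, i.e. that $\sum_{Q_i}(\text{weight from }S(Q_i))\cdot(\text{gradient term on }Q_i)$ does not blow up, which requires both the geometric decay of $\ell(Q_i)$-powers from the exponent condition above and the bounded-overlap property of Whitney cubes so that $\sum_i\int_{Q_i}|\nabla u|^p\ro^b\le C\int_\Omega|\nabla u|^p\ro^b$. Tracking the $\ro(x_0)$-dependent constant through Lemma~\ref{l3.1} is harmless since $x_0$ is fixed, but the exponent on $\ell(Q_i)$ must be computed carefully, as it is exactly there that the sharp condition on $a,b,p,q,\beta$ appears; any slack would weaken the theorem. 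Once (ii) of Theorem~\ref{t2.3} (in its $(q,p)$-form) is established with these exponents, the claimed Poincar\'e inequality follows immediately.
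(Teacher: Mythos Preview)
Your approach is essentially the paper's: verify the Haj{\l}asz--Koskela capacity condition via Whitney chains and the shadow estimate Lemma~\ref{l3.1}. Two points, however, are not handled correctly.

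First, the paper splits $A=A_g\cup A_b$ according to whether $u_{Q'}\le\tfrac12$ for the Whitney cube $Q'\ni x$. Your telescoping inequality $1\le C\sum_{Q_i\in P(Q')}\dots$ only holds when $u_{Q'}>\tfrac12$; if $Q'$ merely meets $A$ one may have $u_{Q'}$ small, and then the oscillation is absorbed locally by the $(q,p)$-Poincar\'e inequality on the single cube $Q'$ (this is precisely where $q\le np/(n-p)$ enters). Without this split the chaining step is not justified.

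Second, and more substantively, your treatment of the double sum is not right. After H\"older in the sum one must bound
\[
\sum_{Q\in\mathcal W}\Big(\int_{S(Q)\cap A_b}\ro^a\Big)^{p'}|Q|^{(1/n-1/p-b/(pn))p'}
\;\le\;C\Big(\int_{A_b}\ro^a\Big)^{p'/q'}.
\]
You propose to replace $\int_{S(Q)\cap A_b}\ro^a$ entirely by $C\,\ell(Q)^{2\beta(n+a)/(1+\beta)}$ and then ``collapse the sum'' because the resulting exponent of $\ell(Q)$ is nonnegative. That does not work: the sum runs over \emph{all} Whitney cubes, so a nonnegative exponent alone gives neither convergence nor the required factor $(\int_{A_b}\ro^a)^{p'/q'}$. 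The paper's Lemma~\ref{l3.2} instead peels off only a partial power of $\int_{S(Q)\cap A_b}\ro^a$, keeps one full factor, switches the order of summation back to $\sum_{Q'}\sum_{Q\in P(Q')}$, and then uses \cite[Lemma~2.6]{kot02} to bound $\sum_{Q\in P(Q')}|Q|^{\gamma}\le C$ along each quasihyperbolic chain (this is where strict positivity of the exponent is actually used). That second interchange and the chain-sum lemma are the missing ingredients.
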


\begin{proof}
For $p = 1$, the same proof as \cite[Proof of theorem 7]{hk98} applies with
Lemma \ref{l3.1}
replacing the $s$-John condition there.

For $p>1$, the proof is similar to the proof of theorem 3.2 in \cite{kot02} with
small modifications from \cite{jkk12}.
We will verify condition (ii) of Theorem
\ref{t2.3}. Let $\mathcal{W}$ be a Whitney decomposition of $\Omega$.
Let $A$ be an admissible set and $Q_0$ some fixed cube. Let $u$ be a
smooth test function which equals 1 on $A$ and 0 on $Q_0.$  We split our set
$A$ to two parts
$$
A_g=\{x\in A : u_Q \leq \frac12 \text{ for some Whitney cube } Q \ni x  \}
$$
and $A_b=A\setminus A_g.$
 For all points $x\in A_g$ with $x\in Q\in \mathcal{W}$, from the properties of
the  Whitney decomposition,  we have $\ro(x)\sim \ell(Q)$, and hence
\begin{eqnarray*}
\frac12 \lf(\int_{Q\cap A} \ro(x)^a\,dx \r)^{\frac{p}{q}}&&\leq C\ell(Q)^{ap/q}\left(\int_Q |u(x)-u_Q|^q\,dx
\right)^{\frac{p}{q}}\\
&&\leq C \ell(Q)^{\frac{ap}q+1-n+\frac {pn}q-b} \int_Q |\nabla u(x)|^p \rho(x)^b\,dx\\
&&\le C \diam(\Omega)^{\frac{ap}q+1-n+\frac {pn}q-b} \int_Q |\nabla u(x)|^p \rho(x)^b\,dx,
\end{eqnarray*}
where $\frac{ap}q+1-n+\frac {pn}q-b\ge p\lf(\frac{a+n}q\frac{2\bz}{1+\bz}+\frac {p-n-b}p\r)\ge 0$.

Summing  over all such cubes $Q$, as $q\ge p$, we obtain
\begin{equation}\label{thegood}
\int_{\Omega} |\nabla u|^p \rho(x)^b \,dx\ge C^{-1}\lf(\int_{A_g} \ro(x)^a\,dx \r)^{\frac{p}{q}}.
\end{equation}

Next we estimate the integral over the bad set. For each $x\in A_b$, let
$P(Q(x))$ consist of the collection of all of the Whitney cubes which
intersect the quasihyperbolic geodesic joining $x_0$ to the center of $Q(x)$,
then a straightforward chaining argument shows that
$$C\sum_{Q\in P(Q(x))} \diam Q\fint_Q |\nabla u(y)| dy\ge 1.$$
Hence, by using the H\"older inequality, we have
\begin{align*}
& \int_{A_b} \rho(x)^a dx\\
&\leq C\int_{A_b} \rho(x)^a \sum_{Q\in P(Q(x))} (\diam
Q)^{1-\frac{n}{p}} \left(\int_Q |\nabla u(y)|^p dy\right)^{1/p} dx
\\&= C \sum_{Q\in \mathcal W} \int_{S(Q)\cap A_b} \rho(x)^adx (\diam
Q)^{1-\frac{n}{p}} \left(\int_Q |\nabla u(y)|^p dy\right)^{1/p}
\\&\leq C \sum_{Q\in \mathcal W} \int_{S(Q)\cap A_b} \rho(x)^adx (\diam
Q)^{1-\frac{n}{p}-\frac{b}{p}} \left(\int_Q |\nabla u(y)|^p \rho(y)^b dy\right)^{1/p}
\\&\overset{\text{H\"older}}{\leq}
C\left(\sum_{Q\in \mathcal W}  \left(\int_{S(Q)\cap A_b}
\rho(x)^adx\right)^{p'} (\diam Q)^{(1-\frac{n}{p}-\frac{b}{p})p'}
\right)^{\frac{1}{p'}}
\left(\sum_{Q\in \mathcal W} \int_Q |\nabla u(y)|^p \rho(y)^b dy
\right)^{\frac{1}{p}}
\\&\leq
C\left(\sum_{Q\in \mathcal W}  \left(\int_{S(Q)\cap A_b}
\rho(x)^adx\right)^{p'} |Q|^{(\frac 1n-\frac{1}{p}-\frac{b}{pn})p'}
\right)^{\frac{1}{p'}}
\left(\int_{\Omega} |\nabla u(y)|^p \rho(y)^b dy
\right)^{\frac{1}{p}}.
\end{align*}
This together with the following Lemma \ref{l3.2} gives that
\begin{equation*}
\lf(\int_{A_b} \rho(x)^a dx\r)^{1/q} \le C \left(\int_{\Omega} |\nabla u(y)|^p \rho(y)^b dy
\right)^{\frac{1}{p}}.
\end{equation*}
The proof is completed by combining the above estimate together with \eqref{thegood}.
\end{proof}

\begin{lem}\label{l3.2}
With the assumptions of Theorem \ref{qhbcpoincare} and $p>1$, we have
\begin{equation*}
\sum_{Q\in \mathcal W}  \left(\int_{S(Q)\cap A_b}
\rho(x)^adx\right)^{p'} |Q|^{(\frac 1n-\frac{1}{p}-\frac{b}{pn})p'} \leq C
\left(\int_{A_b}
\rho(x)^adx\right)^{\frac{p'}{q'}}.
\end{equation*}
\end{lem}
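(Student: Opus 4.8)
The plan is to estimate the sum by a standard two-step procedure: first bound each summand using the shadow-size estimate from Lemma \ref{l3.1}, and then sum the resulting bounded-overlap series. First I would record the elementary bound $\int_{S(Q)\cap A_b}\rho(x)^a\,dx\le \int_{A_b}\rho(x)^a\,dx$, so that each factor $\bigl(\int_{S(Q)\cap A_b}\rho^a\bigr)^{p'}$ can be split as $\bigl(\int_{S(Q)\cap A_b}\rho^a\bigr)^{p'/q'}\cdot\bigl(\int_{S(Q)\cap A_b}\rho^a\bigr)^{p'-p'/q'}$, pulling the first piece out to $\bigl(\int_{A_b}\rho^a\bigr)^{p'/q'}$ (note $p'>p'/q'$ since $q'\ge p'$ because $p\le q$). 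What remains inside the sum is $\sum_{Q}\bigl(\int_{S(Q)\cap A_b}\rho^a\,dx\bigr)^{p'/p\,\cdot\,(1-p/q)\cdot\text{(something)}}$ — more precisely, after the split one is left with the exponent $p'(1-1/q')=p'/q\cdot q'/q'$; I would keep track of the exact exponent carefully, the point being that it should come out to exactly $1$ so that the remaining sum telescopes into a single integral over $A_b$ via bounded overlap of the shadows along a fixed geodesic chain.

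The key geometric input is Lemma \ref{l3.1}: on a $\beta$-QHBC domain, $\diam(S(Q))\le C\,\dist(x_0,\partial\Omega)^{(1-\beta)/(1+\beta)}\diam(Q)^{2\beta/(1+\beta)}$, hence
$$\int_{S(Q)\cap A_b}\rho(x)^a\,dx\le C\,|S(Q)|\cdot(\diam S(Q))^a\le C\,(\diam S(Q))^{n+a}\le C\,(\diam Q)^{\frac{2\beta}{1+\beta}(n+a)},$$
where the dependence on $\dist(x_0,\partial\Omega)$ is absorbed into $C=C(n,\beta,\Omega)$ since $\Omega$ is bounded. I would use this to convert the "extra" $\int_{S(Q)\cap A_b}\rho^a$ factors (those not pulled out to the global integral) into powers of $|Q|^{1/n}$, and combine with the existing factor $|Q|^{(\frac1n-\frac1p-\frac{b}{pn})p'}$. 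A direct computation of exponents should show that the total power of $|Q|$ is $\ge 0$ precisely under the hypothesis $\frac{a+n}{q}\frac{2\beta}{1+\beta}+\frac{p-n-b}{p}>0$ of Theorem \ref{qhbcpoincare}; since $|Q|\le C$, each such factor is bounded by a constant, reducing the sum to $\sum_Q \int_{S(Q)\cap A_b}\rho^a\,dx$ (with one more factor), which by the bounded-overlap property of shadows (each point of $A_b$ lies in $S(Q)$ only for the $O(1)$ cubes $Q$ along its own geodesic chain, by the properties of the $P(Q(x))$ construction) is $\le C\int_{A_b}\rho^a\,dx$.

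The main obstacle I anticipate is purely bookkeeping: reconciling the various exponents $p'$, $q'$, $p'/q'$ and the power of $|Q|$ so that (a) exactly the right amount of $\int_{S(Q)\cap A_b}\rho^a$ is left inside the sum to make the bounded-overlap argument yield $\bigl(\int_{A_b}\rho^a\bigr)^{p'/q'}$, and (b) the leftover power of $|Q|^{1/n}$ is nonnegative under the stated inequality. The inequality $\frac{a+n}{q}\frac{2\beta}{1+\beta}+\frac{p-n-b}{p}>0$ is strict, which gives a positive power of $\diam(Q)$ to spare; one should use the strictness (or just $|Q|\le C(\Omega)$) to uniformly bound those factors. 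A secondary point to handle with care is the case $p<n$ versus $p\ge n$ in the factor $(\diam Q)^{1-n/p-b/p}$, but since everything is expressed through $|Q|$ and $\diam\Omega<\infty$, no separate argument is needed. Once the exponent arithmetic is laid out, the summation itself is immediate from the bounded overlap of the shadows, exactly as in \cite{jkk12,kot02}.
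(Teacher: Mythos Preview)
Your plan contains a genuine conceptual error: the shadows $S(Q)$ do \emph{not} have bounded overlap. If $x\in A_b$ lies in the Whitney cube $Q(x)$, then $x\in S(Q)$ for \emph{every} $Q\in P(Q(x))$, and the chain $P(Q(x))$ can contain arbitrarily many cubes (its length is comparable to $k(x,x_0)$, which is unbounded as $x\to\partial\Omega$). So the step ``$\sum_Q\int_{S(Q)\cap A_b}\rho^a\,dx\le C\int_{A_b}\rho^a\,dx$ by bounded overlap'' fails outright. What the paper actually does is keep exactly one power of $\int_{S(Q)\cap A_b}\rho^a$, write it as $\sum_{Q'\subset S(Q)}\int_{Q'\cap A_b}\rho^a$, swap the order of summation to $\sum_{Q'}\int_{Q'\cap A_b}\rho^a\sum_{Q\in P(Q')}(\cdots)$, and then invoke \cite[Lemma~2.6]{kot02}: along any chain $P(Q')$ the cube sizes form essentially a geometric series, so $\sum_{Q\in P(Q')}|Q|^{\epsilon}\le C$ whenever $\epsilon>0$. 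This is precisely where the \emph{strict} inequality $\frac{a+n}{q}\frac{2\beta}{1+\beta}+\frac{p-n-b}{p}>0$ is used; without the residual positive power of $|Q|$ there is nothing to sum.

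There is also a secondary arithmetic slip in your splitting. You want to pull out $\bigl(\int_{A_b}\rho^a\bigr)^{p'/q'}$ directly and still keep one full factor of $\int_{S(Q)\cap A_b}\rho^a$ inside; that forces the remaining exponent (to be handled by Lemma~\ref{l3.1}) to equal $p'-p'/q'-1=p'/q-1$, which is negative whenever $q>p'$ (e.g.\ $p=2$, $q=3$). The paper's split is $p'=(p'/q'-1)+p'/q+1$: pull out only $p'/q'-1\ge0$, convert the middle piece $p'/q$ via Lemma~\ref{l3.1} (after dropping $A_b$), and leave exactly one power for the Fubini/chain-sum argument above. After the swap the single power contributes $\int_{A_b}\rho^a$, and $(p'/q'-1)+1=p'/q'$ gives the desired exponent. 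Your overall strategy is close, but both the overlap mechanism and the exponent bookkeeping need to be replaced by these two ingredients.
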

\begin{proof} Since $q\ge p>1$, we have $p'-1-\frac{p'}{q}\ge 0$ and hence
 \begin{align*}
\displaystyle
  \sum_{Q\in \mathcal W}&  \left(\int_{S(Q)\cap A_b}
\rho(x)^adx\right)^{p'} |Q|^{(\frac 1n-\frac{1}{p}-\frac{b}{pn})p'}
\\&\leq \left(\int_{A_b}
\rho(x)^adx\right)^{p'-1-\frac{p'}{q}}
\sum_{Q\in \mathcal W}  \left(\int_{S(Q)}
\rho(x)^adx\right)^{\frac{p'}{q}}  \int_{S(Q)\cap A_b}
\rho(x)^adx |Q|^{(\frac 1n-\frac{1}{p}-\frac{b}{pn})p'}\\&=
\left(\int_{A_b}
\rho(x)^adx\right)^{p'-1-\frac{p'}{q}}
\sum_{Q\in \mathcal W}  \left(\frac{\left(\int_{S(Q)}
\rho(x)^adx\right)^{\frac{1}{q}}}{|Q|^{{\frac{1}{p}+\frac{b}{pn}-\frac 1n}}}\right)^{p'}
 \int_{S(Q)\cap A_b}\rho(x)^adx \\&=
\left(\int_{A_b}
\rho(x)^adx\right)^{p'-1-\frac{p'}{q}}
\sum_{Q\in \mathcal W} \sum_{Q'\in \mathcal S(Q)}
\left(\frac{\left(\int_{S(Q)}
\rho(x)^adx\right)^{\frac{1}{q}}}{|Q|^{{\frac{1}{p}+\frac{b}{pn}-\frac 1n}}}\right)^{p'}
 \int_{Q'\cap A_b}\rho(x)^adx \\&=
\left(\int_{A_b}
\rho(x)^adx\right)^{p'-1-\frac{p'}{q}}
\sum_{Q'\in \mathcal W} \sum_{Q\in P(Q')}
\left(\frac{\left(\int_{S(Q)}
\rho(x)^adx\right)^{\frac{1}{q}}}{|Q|^{\frac{1}{p}+\frac{b}{pn}-\frac 1n}}\right)^{p'}
 \int_{Q'\cap A_b}\rho(x)^adx \\&\leq C
\left(\int_{A_b}
\rho(x)^adx\right)^{p'-1-\frac{p'}{q}+1}=C
\left(\int_{A_b}
\rho(x)^adx\right)^{\frac{p'}{q'}}.
 \end{align*}
 Above in estimating the last inequality, we use Lemma \ref{l3.1} to see that
$$
\int_{S(Q)} \rho(y)^ady=\sum_{Q'\in S(Q)}\int_{Q'} \rho(y)^a dy\leq C
|Q|^{(\frac{a}{n}+1)\frac{2\beta}{1+\beta}},
$$
and \cite[Lemma 2.6]{kot02} to obtain
\begin{align}
\label{apulasku}
 \sum_{Q\in P(Q')}\left(\frac{\left(\int_{S(Q)}
\rho(x)^adx\right)^{\frac{1}{q}}}{|Q|^{\frac{1}{p}+\frac{b}{pn}-\frac 1n}}\right)^{p'}
\leq C \sum_{Q\in P(Q')}
|Q|^{((\frac{a}{n}+1)\frac{2\beta}{(1+\beta)q}+{\frac 1n-\frac{1}{p}-\frac{b}{pn}})p'}\leq
C(a,b,p,q,\beta,\Omega, n),
\end{align}
as
$$
(\frac{a}{n}+1)\frac{2\beta}{(1+\beta)q}+\frac 1n-\frac{1}{p}-\frac{b}{np}>0.
$$
The proof is completed.
\end{proof}

\begin{rem}\rm If $a=b=0$, then the Poincar\'e inequality obtained above coincides with
\cite[Theorem 1]{jkk12}. One can modify \cite[Example 5.5]{kot02} to show that
the
Poincar\'e inequality  from Theorem \ref{qhbcpoincare} is sharp, in the sense that the inequality
$$\lf(\int_\Omega|u-u_{\Omega,a}|^q\ro^a\,dx\r)^{1/p}\le
C\lf(\int_\Omega|\nabla u|^p\ro^b\,dx\r)^{1/p}$$
does not holds if
$\frac{a+n}q\frac{2\beta}{1+\beta}+\frac{p-n-b}{p}<0$.
\end{rem}

We have the following Korn inequality for domain satisfying a
$\beta$-QHBC.

\begin{thm}\label{t3.4}
 Let $\Omega\subset \rr^n$ be a proper subdomain satisfying a
$\beta$-QHBC, for some $\beta\leq 1.$ Let $p>1$.
Then there is a constant $C=C(n,p,q,\bz,\Omega)>0$ such that
for every $\mathbf{v}\in W^{1,p}(\Omega,\ro^a)^n$, it holds
$$\int_\Omega|D\mathbf{v}|^p\ro^a\,dx\le C\lf\{\int_\Omega
|\ez(\mathbf{v})|^p\ro^{b-p}\,dx+\int_\Omega| \mathbf{v}|^p\ro^{a}\,dx\r\},\leqno (K_{p,a,b})$$
where $a\ge 0$, $b\in \rr$ satisfying $(a+n)\frac{2\beta}{1+\beta}>{n+b-p}.$

 Moreover, for $a\ge 0$ and $(a+n)\frac{2\beta}{1+\beta}<{n+b-p}$,
 the Korn inequality $(K_{p,a,b-p})$ fails on $\Omega$.
\end{thm}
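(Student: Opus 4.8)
The plan is to split Theorem \ref{t3.4} into its two halves and handle each one via results already in hand.

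For the positive direction, the strategy is simply to invoke Theorem \ref{t2.1} (Korn from Poincar\'e) together with the weighted Poincar\'e inequality of Theorem \ref{qhbcpoincare}. Specifically, I would apply Theorem \ref{qhbcpoincare} with $q=p$, so that the Poincar\'e inequality reads $\int_\Omega|u-u_{\Omega,a}|^p\ro^a\,dx\le C\int_\Omega|\nabla u|^p\ro^{b'}\,dx$ whenever $\frac{a+n}{p}\frac{2\beta}{1+\beta}+\frac{p-n-b'}{p}>0$, i.e. $(a+n)\frac{2\beta}{1+\beta}>n+b'-p$. This is exactly the $(P_{p,a,b'})$-Poincar\'e inequality in the notation of Section 2. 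Feeding this into Theorem \ref{t2.1} yields $(\widetilde K_{p,a,b'-p})$ for any cube $Q\subset\subset\Omega$, and then the argument of Remark \ref{r2.1} (bounding $\int_Q|D\mathbf v|^p\ro^a\,dx$ by $\int_Q|\ez(\mathbf v)|^p\ro^{b'-p}\,dx+\int_Q|\mathbf v|^p\ro^a\,dx$ using the classical Korn inequality on the cube $Q$, on which $\ro$ is comparable to a constant) upgrades this to the full Korn inequality $(K_{p,a,b'-p})$. Renaming $b'$ as $b$, the constraint $(a+n)\frac{2\beta}{1+\beta}>n+b-p$ is precisely the hypothesis stated in the theorem, so this gives $(K_{p,a,b-p})$; one then only has to match the superscript bookkeeping with the displayed $(K_{p,a,b})$ in the statement. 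I would double-check the $p<n$ side condition $q\le \frac{np}{n-p}$ from Theorem \ref{qhbcpoincare}: with $q=p$ this is automatic, so no extra restriction is incurred.

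For the sharpness direction, the plan is to reduce it to the failure of the corresponding Poincar\'e inequality. By Theorem \ref{t2.2} (and its weighted version in the final remark of Section 2), if the Korn inequality $(\widetilde K_{p,a,b-p})$ held on $\Omega$ for some cube $Q\subset\subset\Omega$, then the weighted Poincar\'e inequality $\int_\Omega|u-u_{\Omega,a}|^p\ro^a\,dx\le C\int_\Omega|\nabla u|^p\ro^{b-p}\,dx$ would hold. But the remark following Theorem \ref{qhbcpoincare} (modifying \cite[Example 5.5]{kot02}) produces, for each admissible exponent regime with $\frac{a+n}{q}\frac{2\beta}{1+\beta}+\frac{p-n-b'}{p}<0$, a $\beta$-QHBC domain on which the Poincar\'e inequality fails; taking $q=p$ and $b'=b-p$ there, the failure condition becomes $(a+n)\frac{2\beta}{1+\beta}<n+(b-p)-p+p = n+b-p$, matching the theorem's hypothesis in the sharpness part. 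On that same domain, the Korn inequality $(K_{p,a,b-p})$ must therefore fail, since the Korn inequality trivially implies the weaker $(\widetilde K_{p,a,b-p})$ (the $L^p(Q)$ term is dominated by the $L^p(\Omega)$ term).

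The main obstacle I anticipate is purely a matter of careful exponent accounting rather than a genuine mathematical difficulty: the statement of Theorem \ref{t3.4} writes the conclusion as $(K_{p,a,b})$ in the display but labels it, and the sharpness clause, with $(K_{p,a,b-p})$, so I would need to fix a consistent convention (presumably the conclusion should read $(K_{p,a,b-p})$ with weight $\ro^{b-p}$ on the $\ez(\mathbf v)$ term, as in Theorem \ref{t3.2}) and make sure the inequality $(a+n)\frac{2\beta}{1+\beta}>n+b-p$ lines up with the $q=p$ specialization of Theorem \ref{qhbcpoincare}. A secondary point to verify is that Theorem \ref{t2.1} genuinely applies here: it requires $a\ge 0$ (given) and $p>1$ (given), and the weighted Poincar\'e inequality in the exact form $(P_{p,a,b})$, which is why restricting to $q=p$ in Theorem \ref{qhbcpoincare} is essential — the $q>p$ case does not directly feed into the duality argument of Section 2.
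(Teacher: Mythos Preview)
Your positive direction is exactly what the paper does: apply Theorem~\ref{qhbcpoincare} with $q=p$ to get $(P_{p,a,b})$ under the stated constraint, then feed this into Theorem~\ref{t2.1} and use Remark~\ref{r2.1} to pass from $(\widetilde K_{p,a,b-p})$ to $(K_{p,a,b-p})$.

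Your sharpness argument, however, has a genuine gap. First, the implication ``$(K_{p,a,b-p})$ trivially implies $(\widetilde K_{p,a,b-p})$'' runs the wrong way: Remark~\ref{r2.1} shows $(\widetilde K)\Rightarrow(K)$, not the converse, and indeed $(K_p)$ cannot imply any Poincar\'e inequality in general (it is preserved under disjoint unions, as noted in the Introduction). Second, and more fundamentally, even if you could invoke the weighted Theorem~\ref{t2.2}, the round trip loses a factor of $\ro^p$: the remark after Theorem~\ref{t2.2} says that $(\widetilde K_{p,a,b-p})$ implies the Poincar\'e inequality with gradient weight $\ro^{b-p}$, i.e.\ $(P_{p,a,b-p})$, \emph{not} $(P_{p,a,b})$. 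Plugging $b'=b-p$ into the failure criterion $(a+n)\tfrac{2\beta}{1+\beta}<n+b'-p$ gives $(a+n)\tfrac{2\beta}{1+\beta}<n+b-2p$; your arithmetic ``$n+(b-p)-p+p=n+b-p$'' inserts an extra $+p$ with no justification. So this route would only establish failure in the smaller range $(a+n)\tfrac{2\beta}{1+\beta}<n+b-2p$, leaving the band $n+b-2p\le (a+n)\tfrac{2\beta}{1+\beta}<n+b-p$ untouched.

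The paper avoids this loss entirely by bypassing the Korn-to-Poincar\'e machinery and instead constructing an explicit test vector field on the rooms-and-corridors domain of Example~4.1(2): with $\tau=\sigma$ (so $\beta=\tfrac{1}{2\sigma-1}$), direct computation of $\|D\mathbf u_i\|$, $\|\epsilon(\mathbf u_i)\|$ and $\|\mathbf u_i\|$ on each room--corridor pair shows $(K_{p,a,b-p})$ forces $a+n\ge\sigma(b+n-p)$, i.e.\ $(a+n)\tfrac{2\beta}{1+\beta}\ge n+b-p$, which is sharp.
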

\begin{proof} By using Theorem \ref{t2.1} and the Poincar\'e inequality (Theorem \ref{qhbcpoincare}) with $p=q$,
we see that the Korn inequality $(K_{p,a,b-p})$ holds if $(a+n)\frac{2\beta}{1+\beta}+{p-n-b}>0.$

The converse part follows from Example 4.1(2) in Section 4.
\end{proof}

\begin{rem}\rm
Notice that in the Poincar\'e inequality (Theorem \ref{qhbcpoincare}) and the Korn inequality
(Theorem \ref{t3.4}), there are no result for the borderline case $(a+n)\frac{2\beta}{1+\beta}+{p-n-b}=0$.
However, we believe the Poincar\'e inequality  and the Korn inequality is true at the borderline.
\end{rem}

\section{Examples}

\hskip\parindent
We next give examples to indicate the sharpness of Theorems \ref{t3.2} and \ref{t3.4} for $n=2$.
It is easy to check that the example works also for higher dimension.

{\bf Example 4.1.}
Let $\Omega$ be a domain of the union of sequences of rectangles
$$
\Omega=Q_0\cup C_{1}\cup Q_1\cup C_{2}\cup Q_2\cup C_{3}\cup\ldots.
$$
The rectangles are arranged as in Figure 1. This is possible if the sidelengths
converge to 0 fast enough.
The sidelength of $Q_0$ is one and that of square $Q_i$ is $r_i.$
The heigth of the rectangle $C_{i}$ is $r_i^{\tau}$ and width is
$r_i^{\sigma}$
for all $i\ge 1$, where
$\sigma,\tau\geq 1$ is a fixed real number. The domain is called ``A rooms-and-corridors domain".

\begin{figure}[ht]
\label{fig1}
\centerline{ \epsfig{file=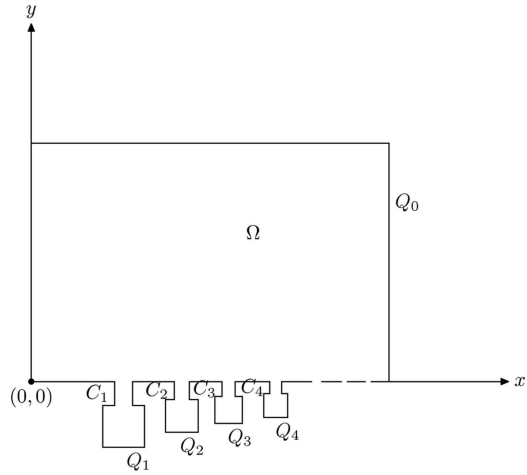, scale=0.9}
}
\caption{A rooms-and-corridors domain.}
\end{figure}

We can control the boundary accessibility by choosing the constants $\sz$ and
$\tau.$ Here are two relevant choices.
\begin{itemize}
\item[(i)] $\Omega$ is an $s$-John domain if $s=\sz$ and $\Omega$ is not an $s$-John domain if $s<\sz$
(independent of $\tau$); see \cite[Example 5.5]{kot02}.

\item[(ii)] $\Omega$ is a $\beta$-QHBC domain if $\sz\le \tau$, $\beta=\frac{1}{2\sigma-1}$ and ¦¸ is not a $\beta$-QHBC domain for
any $\beta >0$ if $1\le\tau<\sz$; see \cite[Example 5.5]{kot02} and
\cite{jkk12}.
\end{itemize}

For each $i\in \cn$, define the vector function $\mathbf{u}_i(x,y)$ on $\Omega$
as follows:
$$\mathbf{u}_i(x,y)=\lf\{
\begin{array}{cccc}
(2y+r_i^\tau, \ -2(x-x_i)),\ \ \ & \ \forall \ \ \ (x,y)\in Q_i\\
(-\frac{y^2}{r_i^\tau},\ \frac{2}{r_i^\tau}(x-x_i)y),\ \ & \ \forall \ (x,y)\in
C_i \\
(0,0),\ \hspace{2cm}   &\   \forall \ (x,y)\in \Omega\setminus (C_i\cup Q_i).
\end{array}
\r.
$$
Above $(x_i,-r_i/2-r_i^\tau)$ is the center of the cube $Q_i$. It is immediate
that $\mathbf{u}_i$
is Lipschitz continuous in $\Omega$.

Direct computation gives that when $(x,y)\in Q_i$,
\begin{eqnarray*} D\mathbf{u}_i(x,y)=\lf(
\begin{array}{cc}
0 &  2 \\
-2 & 0
\end{array}
\r),
\end{eqnarray*}
and hence $\epsilon(\mathbf{u}_i)(x,y)=0$; when $(x,y)\in C_i$,
\begin{eqnarray*} D\mathbf{u}_i(x,y)=\lf(
\begin{array}{cc}
0 &  -2y/r_i^\tau \\
2y/r_i^\tau & 2(x-x_i)/r_i^\tau
\end{array}
\r),
\end{eqnarray*}
and
\begin{eqnarray*} \epsilon(\mathbf{u}_i)(x,y)=\lf(
\begin{array}{cc}
0 &  0 \\
0 & 2(x-x_i)/r_i^\tau
\end{array}
\r).
\end{eqnarray*}
Meanwhile, for $(x,y)\in \Omega\setminus (C_i\cup Q_i)$,
$\epsilon(\mathbf{u}_i)(x,y)=D\mathbf{u}_i(x,y)=0$.

From the above calculations, we deduce that
\begin{equation*}
\int_\Omega|D\mathbf{u}|^p\ro^a\,dx\,dy\gtrsim \int_{Q_i}\ro^a\,dx\,dy\sim
r_i^{a+2},
\end{equation*}
and since  $\ro(x)=r_i^\sz-|x-x_i|$ in the corridor $C_i$, we see that
\begin{equation*}
\int_\Omega|\ez(\mathbf{v})|^p\ro(x)^{b-p}\,dx\,dy\sim
\int_{C_i}(r_i^\sz-|x-x_i|)^{b-p}\lf(\frac{|x-x_i|}{r_i^\tau}\r)^{p}\,dx\,dy\sim
r_i^{\sz (b+1)+\tau(1-p)}.
\end{equation*}

Moreover,
\begin{equation*}
\int_\Omega|\mathbf{u}|^p\ro^{a}\,dx\,dy\sim \int_{Q_i}r_i^p\ro^a\,dx\,dy\sim
r_i^{a+p+2}.
\end{equation*}
The above estimates imply that if the Korn inequality
\begin{equation*}
\int_\Omega|D\mathbf{v}|^p\ro^a\,dx\,dy\le C\lf\{\int_\Omega
|\ez(\mathbf{v})|^p\ro^{b-p}\,dx\,dy+\int_\Omega|
\mathbf{v}|^p\ro^{a}\,dx\,dy\r\}, \leqno(K_{p,a,b-p})
\end{equation*}
holds, then for each $i$, it holds
\begin{equation}
\label{contradiction}
r_i^{a+2}\ls r_i^{\sz (b+1)+\tau(1-p)}+r_i^{a+p+2}.
\end{equation}

By choosing different parameters $\sz$ and $\tau$, we obtain

\begin{itemize}

\item[(1)]{\bf Sharpness of Theorem \ref{t3.2}.} Let $1=\tau\le \sz$, from
Example 4.1 (i) we know $\Omega$ is a $s$-John domain and $s=\sz$.

In this case, if the Korn inequality $(K_{p,a,b-p})$ holds on $\Omega$, then \eqref{contradiction} becomes
$$
r_i^{a+2}\ls r_i^{\sz (b+1)+1-p}+r_i^{a+p+2}.
$$
This is true for all $i.$
Thus $a+2\geq \sz (b+1)+1-p$ and we see that Korn inequality $(K_{p,a,b-p})$ fails if
$a+2<s(b+1)+1-p,$ therefore our Theorem \ref{t3.2} is sharp.

\item[(2)]{\bf Sharpness of Theorem \ref{t3.4}.} Let $1\le \tau = \sz$, then
$\Omega$ is a $\beta$-QHBC domain with $\beta=\frac{1}{2\sigma-1}$
according to Example 4.1 (ii).

Suppose the Korn inequality $(K_{p,a,b-p})$ holds on $\Omega$, then \eqref{contradiction} becomes
$$
r_i^{a+2}\ls r_i^{\sz (b+2-p)}+r_i^{a+p+2}
$$
for each $i.$ This implies that $a+2 \geq \sz( b+2-p)$.
We see that Korn inequality $(K_{p,a,b-p})$ fails if
$ \frac {2\bz}{1+\bz}(a+2)=\frac 1\sz(a+2)<b+2-p,$ which implies that Theorem \ref{t3.4} is sharp.
\end{itemize}

\section*{Acknowledgment}
\hskip\parindent
The authors wish to thank their advisor Professor Pekka Koskela for posing the problem and
helpful discussions. Kauranen was supported by The
Finnish National Graduate School in Mathematics and its Applications.

\vspace{0.4cm}

\noindent Renjin Jiang$^{1}$ \& Aapo Kauranen$^{2}$

\

\noindent
1. School of Mathematical Sciences, Beijing Normal University,
Laboratory of Mathematics and Complex Systems, Beijing 100875, People's Republic of China

\

\noindent 2. Department of Mathematics and Statistics, University of Jyv\"{a}skyl\"{a}, P.O. Box 35 (MaD),
FI-40014
Finland

\

\noindent{\it E-mail addresses}:
\texttt{rejiang@bnu.edu.cn}

\hspace{2.3cm}
\texttt{aapo.p.kauranen@jyu.fi}

\begin{thebibliography}{999}
\vspace{-0.3cm}
\bibitem{adm06} Acosta G., Dur\'an R.G., Muschietti M.A.,
Solutions of the divergence operator on John domains, Adv. Math. 206 (2006), 373-401.

\vspace{-0.3cm}
\bibitem{adl06} Acosta G., Dur\'an R.G., Lombardi A.L., Weighted
Poincar\'e and Korn inequalities for H\"older $\az$ domains, Math.
Methods Appl. Sci. 29 (2006), 387-400.

\vspace{-0.3cm}
\bibitem{adf13} Acosta G., Dur\'an R.G., Fernando L.G., Korn inequality and
divergence operator: Counterexamples and optimality of weighted estimates,
Proc. Amer. Math. Soc. 141 (2013), 217-232.

\vspace{-0.3cm}
\bibitem{bb03}  Bourgain J., Brezis H., On the equation $\mathrm{div}{Y}=f$ and
application to control of phases, J. Amer. Math. Soc. 16 (2003), 393-426.






\vspace{-0.3cm}
\bibitem{cfm05} Conti S., Faraco D., Maggi F., A new approach to
counterexamples to $L^1$ estimates: Korn's inequality, geometric rigidity,
and regularity for gradients of separately convex functions, Arch. Ration.
Mech. Anal. 175 (2005), 287-300.

\vspace{-0.3cm}
\bibitem{dl10} Dur\'an R.G., L\'opez Garc\'ia F., Solutions of the divergence and
Korn inequalities on domains with an external cusp, Ann. Acad. Sci.
Fenn. Math. 35 (2010), 421-438.

\vspace{-0.3cm}
\bibitem{dmrt10}
Dur\'an R.G., Muschietti M.A., Russ E., Tchamitchian P.,
Divergence operator and Poincar\'e inequalities on arbitrary bounded
domains, Complex Var. Elliptic Equ. 55 (2010), 795-816.

\vspace{-0.3cm}
\bibitem{dl76}
Duvaut G., Lions J.-L., Inequalities in Mechanics and Physics, Springer, 1976.


\vspace{-0.3cm}
\bibitem{f47} Friedrichs K.O., On the boundary-value problems of the theory of
elasticity and Korn's inequality, Ann. of Math. 48 (2) (1947) 441-471.


\vspace{-0.3cm}
\bibitem{hk98} Haj{\l}asz P., Koskela P., Isoperimetric inequalities and
imbedding theorems in irregular domains, J. London Math. Soc. (2) 58
(1998), 425-450.

\vspace{-0.3cm}
\bibitem{ho95} Horgan C.O., Korn's inequalities and their applications
in continuum mechanics, SIAM Review, 37 (1995), 491-511.

\vspace{-0.3cm}
\bibitem{hp83} Horgan C.O., Payne L.E., On inequalities of Korn,
Friedrichs and Babu$\check{s}$ka-Aziz,  Arch. Rational Mech. Anal. 82 (1983), 165-179.


\vspace{-0.3cm}
\bibitem{hmv12} Hurri-Syrj\"anen, R., Marola, N., V\"ah\"akangas, A.V.,
Poincar\'e inequalities in quasihyperbolic boundary condition domains,
arxiv:1201.5789.


\vspace{-0.3cm}
\bibitem{jkk12} Jiang R., Kauranen A., A  note to ``Quasihyperbolic boundary conditions
and Poincar\'{e} domains", Math. Ann., to appear.

\vspace{-0.3cm}
\bibitem{jkk13} Jiang R., Kauranen A., Koskela P., Solvablity of the divergence equation
implies John via Poincar\'e inequality, preprint.


\vspace{-0.3cm}
\bibitem{j61} John F., Rotation and strain, Comm. Pure Appl. Math. 4 (1961) 391-414.

\vspace{-0.3cm}
\bibitem{km00} Kilpel\"ainen T., Mal\'y J., Sobolev inequalities on sets with
irregular boundaries, Z. Anal. Anwendungen 19 (2000), 369-380.

\vspace{-0.3cm}
\bibitem{ko89} Kondratiev V.A., Oleinik O.A., On Korn's inequalities,
C. R. Acad. Sci. Paris S\'er. I Math, 308 (1989), 483-487.

\vspace{-0.3cm}
\bibitem{kot02} Koskela P., Onninen J., Tyson J.T., Quasihyperbolic boundary
conditions and Poincar\'e domains, Math. Ann. 323 (2002), 811-830.

\vspace{-0.3cm}
\bibitem{ms78} Martio O., Sarvas J., Injectivity theorems in plane and space. Ann. Acad. Sci. Fenn. Ser.A
I Math. 4 (1978/79), 383-401

\vspace{-0.3cm}
\bibitem{nit81} Nitsche J.A., On Korn's second inequality,
RAIRO J. Numer. Anal., 15 (1981) 237-248.

\vspace{-0.3cm}
\bibitem{SS90} Smith W., Stegenga D.A., H\"older domains and Poincar\'e
domains, Trans. Amer. Math. Soc. 319 (1990), 67-100.


\vspace{-0.3cm}
\bibitem{ti01} Tiero A., On inequalities of Korn, Friedrichs,
Magenes-Stampacchia-Ne$\mathrm{\breve{c}}$as and Babu$\mathrm{\breve{s}}$ka-Aziz, Z. Anal. Anwendungen 20
(2001), 215-222.

\vspace{-0.3cm}
\bibitem{ti72} Ting T.W., Generalized Korn's inequalities, Tensor 25 (1972), 295-302.

\vspace{-0.3cm}
\bibitem{maz85} Maz'ya V.G., Sobolev spaces (Springer, 1985).


\end{thebibliography}
\end{document}